\documentclass{article}

\usepackage{amsmath}
\usepackage{amsfonts}

\usepackage{lipsum}
\usepackage{amsfonts}
\usepackage{graphicx}
\usepackage{epstopdf}
\usepackage{algorithmic}
\usepackage{todonotes}
\usepackage{enumitem}
\usepackage{subcaption}
\usepackage{comment}
\usepackage{url}

\usepackage{amssymb}
\usepackage{amsthm}
\usepackage{xfrac}

\newtheorem{theorem}{Theorem}
\newtheorem{lemma}{Lemma}

\newtheorem{corollary}{Corollary}
\newtheorem{proposition}{Proposition}


\DeclareMathOperator{\Tr}{Tr}
\DeclareMathOperator{\In}{In}

\newtheorem{assumption}[theorem]{Assumption}

\author{Susanne Bradley and Chen Greif}
\title{Eigenvalue Bounds for Double Saddle-Point Systems}

\author{Susanne Bradley\thanks{Department of Computer Science, The University of British Columbia, Vancouver, Canada V6T 1Z4
		({smbrad@cs.ubc.ca}, {greif@cs.ubc.ca}).}
	\and Chen Greif\footnotemark[2]}

\date{October 25, 2021}

\usepackage{amsopn}


\newcommand{\sK}{\mathcal{K}}
\newcommand{\sM}{\mathcal{M}}

\begin{document}

	\maketitle
	
	\begin{abstract}
		We derive bounds on the eigenvalues of a generic form of double saddle-point matrices. The bounds are expressed in terms of extremal eigenvalues and singular values of the associated block matrices. Inertia and algebraic multiplicity of eigenvalues are considered as well. The analysis  includes bounds for preconditioned matrices based on block diagonal preconditioners using Schur complements, and it is shown that in this case the eigenvalues are clustered within a few intervals bounded away from zero.  Analysis for approximations of Schur complements is included.  Some numerical experiments validate our analytical findings. 
	\end{abstract}

\section{Introduction}
Given positive integer dimensions $n \ge m \ge p$, consider the $(n+m+p) \times (n+m+p)$ double saddle-point system
\begin{equation}
	\sK  u = b,
	\label{eq:sp_system}
\end{equation}
where 
\begin{equation}
	\label{eqn:2spdef}
	\sK = \begin{bmatrix}
		A & B^T & 0 \\
		B & -D & C^T \\
		0 & C & E
	\end{bmatrix} ; \quad 
	u = \begin{bmatrix}
		x \\
		y \\
		z
	\end{bmatrix}; \quad
	b = \begin{bmatrix}
		p \\
		q \\
		r
	\end{bmatrix}.
\end{equation}
In \eqref{eqn:2spdef}, $A\in \mathbb{R}^{n \times n}$ is assumed symmetric positive definite, $D \in \mathbb{R}^{m \times m}$ and $E \in \mathbb{R}^{p \times p}$ are positive semidefinite, and $B \in \mathbb{R}^{m \times n}$, $C \in \mathbb{R}^{p \times m}.$

Under the assumptions stated above, $\sK$  is symmetric and indefinite, and solving \eqref{eq:sp_system} presents several numerical challenges. When the linear system is too large  for direct solvers to work effectively, iterative methods \cite{s2003} are preferred; this is the scenario that is of interest to us in this work. A minimum residual Krylov subspace solver such as MINRES \cite{ps1975} is a popular choice due to its optimality and short recurrence properties. Other solvers may be highly effective as well. 

Linear systems of the form \eqref{eq:sp_system}--\eqref{eqn:2spdef} appear frequently in multiphysics problems, and their numerical solution is of increasing importance and interest. There is a large number of relevant applications \cite{cai21, multiphysics2013, sogn18}; liquid crystal problems \cite{beik18, rg2013}, Darcy-Stokes equations \cite{cai09, mardal2021}, coupled poromechanical equations \cite{ferronato19}, magma-mantle dynamics \cite{rhebergen15}, and PDE-constrained optimization problems \cite{pearson12, rees10b} are just a small subset of linear systems that fit into this framework.

The matrices $D$ and $E$ are often associated with regularization (or stabilization). We refer to the special case of $D=E=0$ as an {\em unregularized form} of $\sK$, and denote it by $\sK_0$:
\begin{equation}
	\sK_0 = \begin{bmatrix}
		A & B^T & 0 \\
		B & 0& C^T \\
		0 & C & 0
	\end{bmatrix}.
	\label{eq:sK0}
\end{equation}
This simpler form is of much potential interest, as it may be considered a direct generalization of the standard saddle-point form for $2 \times 2$ block matrices:
\begin{equation}
	\tilde{\sK}_0 = \begin{bmatrix}
		A & B^T  \\
		B & 0 
	\end{bmatrix}.
	\label{eq:K2by2}
\end{equation}

Matrices of the form~\eqref{eq:K2by2} have been extensively studied, and their analytical and numerical properties are well understood; see \cite{bgl05, miro2018} for excellent surveys. Some properties of $\sK$ follow from appropriately reordering and partitioning the block matrix and then using known results for block-$2 \times 2$ saddle-point matrices (as given in, for example, \cite{gould09, ruiz18, rusten92, silvester94}).
Specifically, $\sK$ can be reordered and partitioned into a $2 \times 2$ block matrix
\begin{equation}
	\label{eqn:k_2}
	\tilde{\sK} = \left[\begin{array}{c c | c}
		A & 0 & B^T \\
		0 & E & C \\
		\hline
		B & C^T & -D
	\end{array}
	\right].
\end{equation}
While this approach is often effective, we may benefit from considering the block-$3 \times 3$ formulation $\sK$ directly, without resorting to \eqref{eqn:k_2}. When $E$ is rank-deficient or zero (as occurs, for example, in \cite{gatica00,langer07}), the leading $2 \times 2$ block of $\tilde{\sK}$ is singular, even if $A$ is full rank. It is then more challenging to develop preconditioners for $\tilde{\sK}$ or to obtain bounds on its eigenvalues, as we are restricted to methods that can handle singular leading blocks. Additionally, by considering the block-$3 \times 3$ formulation in deriving eigenvalue bounds, we will see in this paper that we can derive effective bounds using the singular values of $B$ and $C$ (along with the eigenvalues of the diagonal blocks). Analyzing $\tilde{\sK}$ using established results for block-$2 \times 2$ matrices (such as those given by Rusten and Winther \cite{rusten92}) instead requires singular values of the larger off-diagonal block $\begin{bmatrix} B & C^T \end{bmatrix}$, which may be more difficult to obtain than singular values of $B$ and $C$. (We can estimate the singular values of $\begin{bmatrix} B & C^T \end{bmatrix}$ in terms of those of $B$ and $C$, but if these estimates are loose it will result in loose eigenvalue bounds.) 

There has been a recent surge of interest in the iterative solution of multiple saddle-point problems, and our work adds to the increasing body of literature that considers these problems. Recent papers that provide interesting analysis are, for example, \cite{beik18, beigl20, cai21, pearson21, sogn18}. 

The distribution of eigenvalues of $\sK$ plays a central role in determining the efficiency of iterative solvers. It is therefore useful to gain an understanding of the spectral structure as part of the selection and employment of solvers. Effective preconditioners are instrumental in accelerating convergence for sparse and large linear systems; see \cite{b2002, pp2020, s2003, w2015} for general overviews, and \cite{bgl05, pw2015, miro2018} for a useful overview of  solvers and preconditioners  for saddle-point problems. For multiphysics problems it has been demonstrated that exploiting the properties of the underlying discrete differential operators and other characteristics of the problem at hand is beneficial in the development of robust and fast solvers; see, e.g., \cite{ferronato19}.

There are several potential approaches to the development of preconditioners for the specific class of problem that we are considering. Monolithic preconditioners, which work on the entire matrix, have been recently shown to be extremely effective. Recent work such as \cite{monolithic} has pushed the envelope towards scalable solvers based on this methodology.  Another increasingly important approach is operator preconditioning based on continuous spaces; see \cite{h2006, mw2011}. This approach relies on the properties of the underlying continuous differential operators, and uses tools such as Riesz representation and natural norm considerations to derive block diagonal preconditioners. Block diagonal preconditioners can also be derived directly by linear algebra considerations accompanied by properties of discretized PDEs; see, for example, \cite{elman05, pw2015}.

The preconditioner we consider for our analysis is:
\begin{equation}
	\label{eq:M}
	\sM := \begin{bmatrix}
		A & 0 & 0 \\
		0 & S_1 & 0 \\
		0 & 0 & S_2
	\end{bmatrix},
\end{equation}
where 
\begin{equation}
	S_1 = D+BA^{-1}B^T ; \quad
	S_2 = E + CS_1^{-1}C^T.
	\label{eq:S12}
\end{equation} 
We assume that $S_1$ and $S_2$ are both positive definite. 

The preconditioner $\sM$ is based on Schur complements. It has been considered in, for example, \cite{cai21, sogn18}, and is a natural extension of \cite{i2001, mgw2000} for block-$2 \times 2$ matrices of the form \eqref{eq:K2by2}. In practice the Schur complements $S_1$ and $S_2$ defined in~\eqref{eq:S12} are too expensive to form and invert exactly. It is therefore useful to consider approximations to those matrices when a practical preconditioner is to be developed, and we include an analysis of that scenario.

The recent papers of Sogn and Zulehner \cite{sogn18} and Cai et al. \cite{cai21} both analyze the performance of a block-$n \times n$ block diagonal preconditioner analogous to $\sM$ defined in \eqref{eq:M}, though the former focus on the spectral properties of the continuous (rather than discretized) preconditioned operator, and Cai et al. focus their analyses of this preconditioner on the case where all diagonal blocks except $A$ are zero. Existing analyses of the eigenvalues of unpreconditioned block-$3 \times 3$ matrices have often been restricted to specific problems, such as interior-point methods in constrained optimization; see \cite{gmo14, morini16}.

Our goal in this paper is to provide a general framework for analysis of eigenvalue bounds  for the double saddle-point case, with a rather minimal set of assumptions on the matrices involved. To our knowledge, ours is the first work to provide general spectral bounds for the unpreconditioned matrix $\sK$ and for cases where inexact Schur complements are used. 

In Section \ref{sec:unprec} we derive bounds on the eigenvalues of $\sK$. In Section \ref{sec:prec} we turn our attention to  preconditioned matrices of the form $\sM^{-1} \sK$.  In Section \ref{sec:prec_inexact} we allow the preconditioners to contain 
approximations of Schur complements and an approximate inversion of the leading block, and show how the bounds are affected as a result.
In Section \ref{sec:numex} we validate some of our analytical observations with numerical experiments. Finally, in Section \ref{sec:conclusions} we draw some conclusions.

\medskip

{\em Notation.}  The $n+m+p$ eigenvalues of the unpreconditioned and preconditioned double saddle-point matrices will be denoted by $\lambda$. We will use $\mu$ with appropriate matrix  superscripts and subscripts to denote eigenvalues of the matrix blocks, and $\sigma$ will accordingly signify singular values. The eigenvalues of $A$, for example, will be denoted by
$$ \mu_i^A, \quad i=1,\dots,n, $$ and in terms of ordering we will assume that 
$$ \mu_1 \geq \mu_2 \geq \cdots \geq \mu_n > 0.$$ To increase clarity, we will use $\mu_{\max}^A$ to denote $\mu_1^A$ and $\mu_{\min}^A$ to denote $\mu_n^A$, and so on. 

Based on the above conventions, we use the following notation for eigenvalues and singular values of the matrices comprising
$\sK$:
\smallskip
\begin{center}
	\begin{tabular}{|c|c|c|c|c|c|c|}
		\hline
		matrix  & size& type  & number & notation  & $\max$ & $\min$ \\
		\hline \hline
		$A$ & $n \times n$ & eigenvalues & $n$ & $\mu_i^A, \ i=1,\dots,n$ &   $\mu_{\max}^A$ & $\mu_{\min}^A$  \\ \hline
		$B$ & $m \times n$ & singular values & $m$ & $\sigma_i^B, \ i=1,\dots,m$ &   $\sigma_{\max}^B$ & $\sigma_{\min}^B$  \\ \hline
		$C$ & $p \times m$ & singular values & $p$ & $\sigma_i^C, \ i=1,\dots,p$ &   $\sigma_{\max}^C$ & $\sigma_{\min}^C$  \\ \hline
		$D$ & $m \times m$ & eigenvalues & $m$ & $\mu_i^D, \ i=1,\dots,m$ &   $\mu_{\max}^D$ & $\mu_{\min}^D$  \\ \hline
		$E$ & $p \times p$ & eigenvalues & $p$ & $\mu_i^D, \ i=1,\dots,p$ &   $\mu_{\max}^E$ & $\mu_{\min}^E$  \\ \hline
	\end{tabular}
\end{center}
\medskip

\section{Eigenvalue bounds for $\sK$}
\label{sec:unprec}

\subsection{Inertia and solvability conditions}
\label{sec:inertia}

We first discuss the inertia and conditions for nonsingularity of $\sK$ defined in \eqref{eqn:2spdef}. Recall that the inertia of a matrix is the triplet denoting the number of its positive, negative, and zero eigenvalues \cite[Definition 4.5.6]{hj85}.

\begin{proposition}
	The following conditions are necessary for $\sK$ to be invertible:
	\begin{enumerate}[label={(\roman*)}]
		\item $\ker(A) \cap \ker(B) = \emptyset$;
		\item $\ker(B^T) \cap \ker(D) \cap \ker(C) = \emptyset$;
		\item $\ker(C^T) \cap \ker(E) = \emptyset$.
	\end{enumerate}
	A sufficient condition for $\sK$ to be invertible is that $A$, $S_1$, and $S_2$ are invertible.
\end{proposition}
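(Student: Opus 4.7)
The plan is to treat the two halves separately. For the necessary conditions (i)--(iii), I would argue the contrapositive: if any of them fails, I will exhibit a nonzero vector in the kernel of $\sK$. Specifically, if $0 \neq x \in \ker(A) \cap \ker(B)$, then $u = (x;0;0)$ satisfies $\sK u = 0$, since the first block-row gives $Ax = 0$, the second gives $Bx = 0$, and the third is trivially $0$. Similarly, if $0 \neq y \in \ker(B^T) \cap \ker(D) \cap \ker(C)$, then $u=(0;y;0)$ lies in $\ker(\sK)$ because $B^T y=0$, $-Dy=0$, and $Cy=0$. Finally, if $0 \neq z \in \ker(C^T) \cap \ker(E)$, then $u=(0;0;z)$ gives $\sK u = 0$ since $C^T z = 0$ and $Ez=0$. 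This handles the three necessity claims in parallel.

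For sufficiency, I would perform a block $LDL^T$--style elimination, which is the natural thing given the definitions of $S_1$ and $S_2$. First, with $A$ invertible, left-multiplication by the block elementary matrix with $-BA^{-1}$ in the $(2,1)$ block eliminates $B$ and turns the $(2,2)$ block into $-D - BA^{-1}B^T = -S_1$. Then, with $S_1$ invertible, left-multiplication by the block elementary matrix with $CS_1^{-1}$ in the $(3,2)$ block eliminates $C$ in position $(3,2)$ and turns the $(3,3)$ block into $E + CS_1^{-1}C^T = S_2$. The result is a block upper-triangular matrix with diagonal blocks $A$, $-S_1$, $S_2$. Collecting factors gives the factorization $\sK = L U$ with
\[
L = \begin{bmatrix} I & 0 & 0 \\ BA^{-1} & I & 0 \\ 0 & -CS_1^{-1} & I \end{bmatrix}, \qquad
U = \begin{bmatrix} A & B^T & 0 \\ 0 & -S_1 & C^T \\ 0 & 0 & S_2 \end{bmatrix}.
\]
Since $\det(L) = 1$ and $\det(U) = (-1)^m \det(A)\det(S_1)\det(S_2)$, invertibility of $A$, $S_1$, and $S_2$ implies $\det(\sK) \neq 0$.

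There is no real obstacle here; the only thing to be careful about is that the factorization makes implicit use of $A^{-1}$ and $S_1^{-1}$, so the sufficiency claim is genuinely about all three of $A$, $S_1$, $S_2$ being invertible (rather than, say, just $A$ and $S_2$, which is a common source of confusion in nested Schur complements). I would also note in passing that because $A$ is assumed SPD and $D,E$ are PSD throughout the paper, condition (i) is automatic and (ii)--(iii) simplify; but the proposition is stated in a form that does not require those assumptions, so the proof above covers the general case.
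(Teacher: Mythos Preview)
Your proof is correct and essentially identical to the paper's. For necessity you argue the contrapositive by exhibiting the same null vectors $(x;0;0)$, $(0;y;0)$, $(0;0;z)$, and for sufficiency you produce the same block elimination; the only cosmetic difference is that the paper writes the factorization as $L\mathcal{D}L^T$ with $\mathcal{D}=\mathrm{diag}(A,-S_1,S_2)$, whereas you absorb $\mathcal{D}L^T$ into a single upper-triangular $U$ and appeal to the determinant.
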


\begin{proof}
	We begin with the proof of statement (i) by assuming to the contrary that the intersection of the kernels is not empty -- namely, there exists a nonzero vector $x$ such that $Ax = Bx = 0$. This would mean that the block vector $\begin{bmatrix} x ^T & 0 & 0 \end{bmatrix}^T$ was a null vector of $\sK$, which would imply that $\sK$ was singular. Similar reasoning proves (ii) and (iii).
	
	For the sufficient condition, we observe that when $A$, $S_1$, and $S_2$ are invertible we can write a block-$LDL^T$ factorization of $\sK$:
	\begin{equation}
		\label{eqn:ldlt_k}
		\hspace{-2mm}
		\begin{bmatrix}
			A & B^T & 0 \\
			B & -D & C^T \\
			0 & C & E
		\end{bmatrix} =
		\begin{bmatrix}
			I & 0 & 0 \\
			BA^{-1} & I & 0 \\
			0 & -CS_1^{-1} & I
		\end{bmatrix}
		\underbrace{
			\begin{bmatrix}
				A & 0 & 0 \\
				0 & -S_1 & 0 \\
				0 & 0 & S_2
		\end{bmatrix}}_{=: \mathcal{D}}
		\begin{bmatrix}
			I & A^{-1}B^T & 0 \\
			0 & I & -S_1^{-1}C^T \\
			0 & 0 & I
		\end{bmatrix}.
	\end{equation}
	When $A$ and $S_1$ are invertible, $S_2$ is well-defined and $\sK$ is invertible if and only if $\mathcal{D}$ is invertible. The stated result follows.
\end{proof}

Throughout the rest of this paper, we assume that the sufficient condition holds: namely, that $A$, $S_1$, $S_2$ are invertible. Given our assumptions that $D$ and $E$ are semidefinite, this is equivalent to $A$, $S_1$, and $S_2$ being positive definite. This allows us to obtain the following result on the inertia of $\sK$, which will be useful in deriving our bounds.

\begin{lemma}[Inertia of a double saddle-point matrix]
	\label{lem:inertia1}
	The matrix $\sK$ has $n+p$ positive eigenvalues and $m$ negative eigenvalues.
\end{lemma}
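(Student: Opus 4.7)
The plan is to exploit the block $LDL^T$ factorization already displayed in \eqref{eqn:ldlt_k} together with Sylvester's law of inertia. Under the standing assumption that $A$, $S_1$, and $S_2$ are all invertible (and therefore, given the semidefiniteness of $D$ and $E$, all positive definite), the factorization
\[
\sK = L \mathcal{D} L^T,
\]
where $L$ is the unit lower block-triangular factor in \eqref{eqn:ldlt_k} and $\mathcal{D} = \diag(A, -S_1, S_2)$, is a valid congruence transformation because $L$ is nonsingular.

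By Sylvester's law of inertia, $\In(\sK) = \In(\mathcal{D})$. Since the inertia of a block diagonal matrix is the sum of the inertias of its diagonal blocks, I would count contributions as follows. The block $A$ is $n \times n$ and positive definite, contributing $n$ positive eigenvalues. The block $-S_1$ is $m \times m$ and, since $S_1$ is positive definite by assumption, it is negative definite and contributes $m$ negative eigenvalues. The block $S_2$ is $p \times p$ and positive definite, contributing $p$ positive eigenvalues. Summing gives $n + p$ positive and $m$ negative eigenvalues, with no zero eigenvalues, matching the claim.

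There is essentially no obstacle: the factorization has already been exhibited, the positivity of $A$, $S_1$, $S_2$ is part of the standing hypothesis, and Sylvester's law closes the argument in a single line. The only minor point worth stating explicitly in the write-up is that $L$ being unit triangular makes $\det(L) = 1 \neq 0$, justifying that $L \mathcal{D} L^T$ is indeed a congruence and not merely a similarity-like manipulation.
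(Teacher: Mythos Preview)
Your proposal is correct and follows exactly the same approach as the paper: invoke the block $LDL^T$ factorization \eqref{eqn:ldlt_k} and apply Sylvester's law of inertia to read off the signs from $\mathcal{D} = \diag(A,-S_1,S_2)$. The paper's proof is terser, but the content is identical.
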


\begin{proof}
	The matrices $A$, $S_1$, and $S_2$ are symmetric positive definite, by our assumptions. 
	Sylvester's Law of Inertia tells us the inertia of $\sK$ is the same as that of $\mathcal{D}$ defined in \eqref{eqn:ldlt_k}; the stated result follows.
\end{proof}

\begin{corollary}
	\label{cor:cubic-poly}
	Let $a, b, c, d,$ and $e$ be scalars with $a > 0$, $d, e \ge 0$, $s_1 := d + \frac{b^2}{a} > 0$ and $s_2 := e + \frac{c^2}{s_1} > 0$. Any cubic polynomial of the form
	$$
	p(\lambda) = \lambda^3 + (d-a-e)\lambda^2 + (ae -ad -de - b^2 - c^2)\lambda + (ade +ac^2 +b^2e)
	$$
	has two positive real roots and one negative real root. 
\end{corollary}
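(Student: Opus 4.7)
The plan is to recognize $p(\lambda)$ as the characteristic polynomial of a scalar instance of a double saddle-point matrix and then invoke Lemma \ref{lem:inertia1}. Specifically, I would consider the $3\times 3$ symmetric matrix
$$M = \begin{bmatrix} a & b & 0 \\ b & -d & c \\ 0 & c & e \end{bmatrix},$$
which matches the template of $\sK$ in \eqref{eqn:2spdef} with $n = m = p = 1$, taking $A = [a]$, $B = [b]$, $C = [c]$, $D = [d]$, $E = [e]$.

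First I would compute $\det(\lambda I - M)$ by cofactor expansion along the first row, which gives $(\lambda-a)[(\lambda+d)(\lambda-e) - c^2] - b^2(\lambda-e)$. Expanding and collecting powers of $\lambda$ yields precisely the coefficients $(d-a-e)$, $(ae - ad - de - b^2 - c^2)$, and $(ade + ac^2 + b^2 e)$ displayed in the statement, so $p(\lambda)$ is the characteristic polynomial of $M$. This is purely a routine algebraic verification; there is no conceptual obstacle.

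Next, I would check that the scalar analogues of the hypotheses of Lemma \ref{lem:inertia1} hold. Positive definiteness of $A$ reduces to $a > 0$; the scalar Schur complements are $S_1 = d + b^2/a = s_1$ and $S_2 = e + c^2/s_1 = s_2$, whose positive definiteness reduces exactly to $s_1 > 0$ and $s_2 > 0$. These are precisely the given hypotheses of the corollary. Lemma \ref{lem:inertia1} then states that $M$ has $n+p = 2$ positive eigenvalues and $m = 1$ negative eigenvalue, and since $M$ is real symmetric its eigenvalues are real. Therefore $p(\lambda)$ has two positive real roots and one negative real root, as claimed.
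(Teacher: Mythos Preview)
Your proposal is correct and matches the paper's own proof essentially line for line: the paper also introduces the $3\times 3$ double saddle-point matrix with entries $a,b,c,d,e$, verifies that $p(\lambda)$ is its characteristic polynomial, and then invokes Lemma~\ref{lem:inertia1} (together with symmetry to guarantee real roots). The only cosmetic difference is that the paper writes the characteristic polynomial via the trace/determinant formula rather than a cofactor expansion.
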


\begin{proof}
	Consider the $3 \times 3$ matrix
	$$
	P = \begin{bmatrix}
		a & b & 0 \\
		b & -d & c \\
		0 & c & e
	\end{bmatrix}.
	$$
	Using the  characteristic polynomial of $P$, it is straightforward to confirm that
	\begin{align*}
		\det(\lambda I-P) &= \lambda^3 - \Tr(P)\lambda^2 - \frac{1}{2}\left( \Tr(P^2) - \Tr^2(P) \right)\lambda - \det(P) \\
		&= p(\lambda).
	\end{align*}
	Because $P$ is symmetric its eigenvalues are real, and because $P$ is a double saddle-point matrix with $n = m = p =1$, the two positive and one negative root follow by Lemma \ref{lem:inertia1}. See Figure~\ref{fig:cubic} for a graphical illustration.
\end{proof}

\begin{figure}
	\centering
	\includegraphics[width=0.4\textwidth]{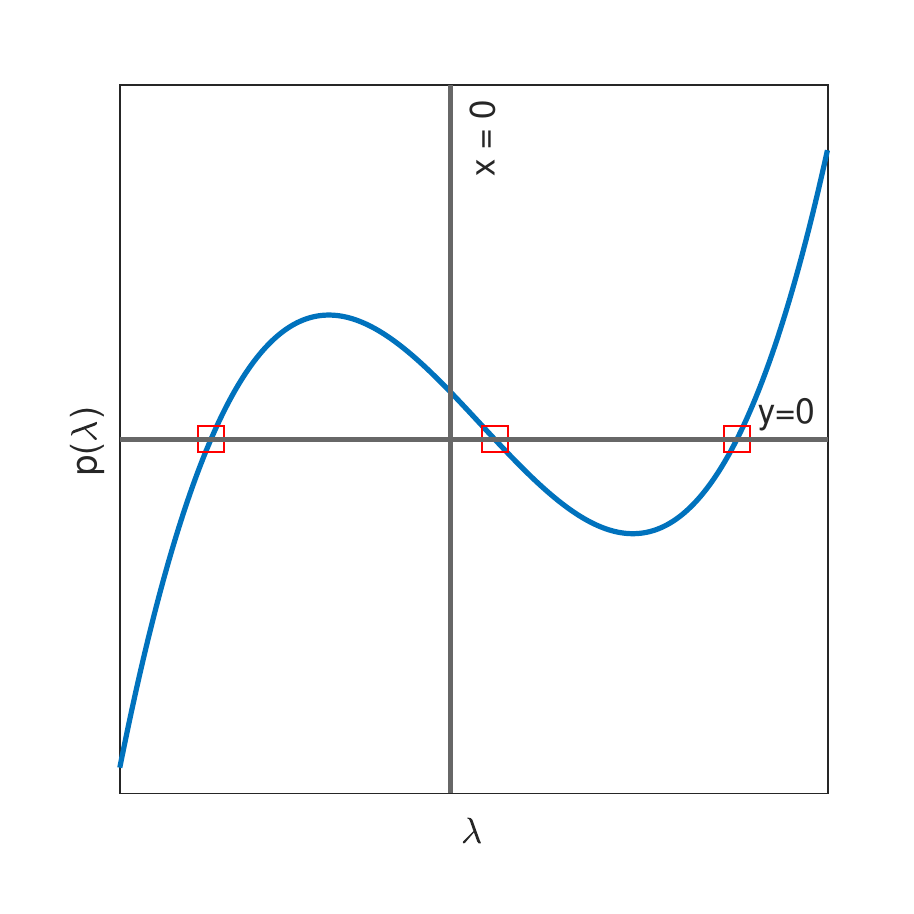}
	\caption{Plot of a cubic polynomial $p(\lambda)$ of the form described in Corollary \ref{cor:cubic-poly}, with two positive roots and one negative root. \label{fig:cubic}}
\end{figure}

\subsection{Derivation of bounds}
\label{sec:derivationK}	
Let us define three cubic polynomials, as follows:
\begin{subequations}
	\begin{align}
		p(\lambda) =  \lambda^3 + (\mu_{\max}^D - \mu_{\min}^A)\lambda^2 - \left(\mu_{\min}^A \mu_{\max}^D + (\sigma_{\max}^B)^2 + (\sigma_{\min}^C)^2 \right)\lambda + \mu_{\min}^A(\sigma_{\min}^C)^2;
		\label{eq:p}
	\end{align}
	\begin{align}
		\begin{split}
			q(\lambda)  =  & \lambda^3 + (\mu^D_{\min} - \mu^A_{\max} - \mu^E_{\max})\lambda^2 \\ & +  \left( \mu^A_{\max}\mu^E_{\max} - \mu^A_{\max}\mu^D_{\min} - \mu^D_{\min}\mu^E_{\max} - (\sigma^B_{\max})^2 - (\sigma^C_{\max})^2   \right) \lambda  \\
			& + \left( \mu^A_{\max}\mu^D_{\min}\mu^E_{\max} + \mu_{\max}^{A}(\sigma^C_{\max})^2+ (\sigma^B_{\max})^2\mu^E_{\max} \right);
		\end{split}
		\label{eq:q}
	\end{align}
	
	\begin{align}
		\begin{split}
			r(\lambda) = & \lambda^3 + (\mu^D_{\max} -\mu^A_{\min} -  \mu^E_{\min})\lambda^2 \\ & + \left( \mu^A_{\min}\mu^E_{\min} - \mu^A_{\min}\mu^D_{\max} - \mu^D_{\max}\mu^E_{\min} - (\sigma^B_{\max})^2 - (\sigma^C_{\max})^2    \right) \lambda \\
			& + (\mu^A_{\min}\mu^D_{\max}\mu^E_{\min} + \mu_{\min}^A(\sigma^C_{\max})^2  + (\sigma^B_{\max})^2 \mu_{\min}^E).
		\end{split}
		\label{eq:r}
	\end{align}
	\label{eq:pqr}
\end{subequations}

All three of these polynomials are of the form described in Corollary \ref{cor:cubic-poly}. Thus, all roots are real and each polynomial has two positive roots and one negative root. For notational convenience, we will denote the negative root of $p(\lambda)$, for example, by $p^-$, and use subscripts {\em max} and {\em min} to distinguish between the two positive roots. For example,  $p^+_{\max}$ will denote the largest positive root and $p^+_{\min}$ will denote the smallest positive root. The same notational rules apply to $q(\lambda)$ and $r(\lambda)$.

\begin{theorem}[Eigenvalue bounds, matrix $\sK$]
	\label{thm:bounds_unprec}
	Using the notation established  in \eqref{eq:pqr},
	the eigenvalues of $\sK$ are bounded within the intervals
	\begin{equation} 
		\left[r^-, \frac{\mu^A_{\max} - \sqrt{(\mu^A_{\max})^2 + 4(\sigma^B_{\min})^2}}{2} \right] \ \bigcup \ \left[ p^+_{\min}, q^+_{\max} \right]. 
		\label{eq:bound}
	\end{equation}
\end{theorem}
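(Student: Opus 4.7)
The plan is a Rayleigh-quotient reduction to a $3 \times 3$ eigenproblem followed by comparison against the three reference cubics $p$, $q$, $r$. Let $\lambda$ be an eigenvalue of $\sK$ with a unit eigenvector partitioned as $(x, y, z)$, and set $\alpha = \|x\|^2$, $\beta = \|y\|^2$, $\gamma = \|z\|^2$. Assuming first that $\alpha, \beta, \gamma > 0$, I define the Rayleigh-quotient parameters $a = x^T A x/\alpha$, $d = y^T D y/\beta$, $e = z^T E z/\gamma$, $\hat b = y^T B x/\sqrt{\alpha\beta}$, and $\hat c = z^T C y/\sqrt{\beta\gamma}$. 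Pairing the three block rows of $\sK u = \lambda u$ with $x, y, z$ and dividing the $k$th by $\sqrt{\alpha}, \sqrt{\beta}, \sqrt{\gamma}$ respectively shows that $(\sqrt{\alpha}, \sqrt{\beta}, \sqrt{\gamma})^T$ is an eigenvector of
\[
P = \begin{bmatrix} a & \hat b & 0 \\ \hat b & -d & \hat c \\ 0 & \hat c & e \end{bmatrix}
\]
with eigenvalue $\lambda$, where $a \in [\mu_{\min}^A, \mu_{\max}^A]$, $d \in [\mu_{\min}^D, \mu_{\max}^D]$, $e \in [\mu_{\min}^E, \mu_{\max}^E]$, $\hat b^2 \le (\sigma_{\max}^B)^2$, and $\hat c^2 \le (\sigma_{\max}^C)^2$. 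By Corollary \ref{cor:cubic-poly}, $P$ has exactly two positive and one negative eigenvalue, and its characteristic polynomial has the same form as $p$, $q$, $r$. The degenerate cases in which $\alpha$, $\beta$, or $\gamma$ vanishes reduce to eigenproblems on smaller blocks and are handled separately.

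The upper bound $\lambda \le q_{\max}^+$ on positive eigenvalues and the lower bound $\lambda \ge r^-$ on negative eigenvalues follow directly from Courant--Fischer applied term by term. Estimating from above, $v^T \sK v \le \mu_{\max}^A \alpha + 2\sigma_{\max}^B \sqrt{\alpha\beta} - \mu_{\min}^D \beta + 2\sigma_{\max}^C \sqrt{\beta\gamma} + \mu_{\max}^E \gamma = w^T P_q w$ where $w = (\sqrt{\alpha}, \sqrt{\beta}, \sqrt{\gamma})^T$ is a unit vector and $P_q$ is the reference matrix matching $q$; the Rayleigh quotient bound then gives $w^T P_q w \le \lambda_{\max}(P_q) = q_{\max}^+$. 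The symmetric estimate $v^T \sK v \ge \mu_{\min}^A \alpha - 2\sigma_{\max}^B \sqrt{\alpha\beta} - \mu_{\max}^D \beta - 2\sigma_{\max}^C \sqrt{\beta\gamma} + \mu_{\min}^E \gamma = w^T P_r' w$ yields $\lambda_{\min}(\sK) \ge \lambda_{\min}(P_r') = r^-$, where $P_r'$ is a diagonal $\pm 1$ sign-conjugate of $P_r$ and hence shares its eigenvalues.

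The upper bound on negative eigenvalues bypasses the $3 \times 3$ reduction. The $(n + m) \times (n + m)$ leading principal submatrix of $\sK$ is $\begin{bmatrix} A & B^T \\ B & -D \end{bmatrix}$, whose negative eigenvalues satisfy the standard Rusten--Winther-type bound $\lambda \le \tfrac{1}{2}\bigl(\mu_{\max}^A - \sqrt{(\mu_{\max}^A)^2 + 4(\sigma_{\min}^B)^2}\bigr)$; Cauchy interlacing then transfers the bound to $\sK$.

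The main obstacle is the lower bound $\lambda \ge p_{\min}^+$ on positive eigenvalues, which concerns the \emph{smaller} positive eigenvalue of each reduced $P$ rather than the extremal one. The one-sided Courant--Fischer estimate above does not help, and a Perron--Frobenius-based monotonicity argument for $\lambda_1(P)$ cannot be repeated for $\lambda_2(P)$, since the eigenvector of the smaller positive eigenvalue of a tridiagonal symmetric matrix can exhibit sign pattern either $(+, +, -)$ or $(+, -, -)$ depending on the parameters. My plan is to argue directly from $\pi_P(\lambda) = 0$, exploiting the forced identities $\hat b = (\lambda - a) \sqrt{\alpha/\beta}$ and $\hat c = (\lambda - e) \sqrt{\gamma/\beta}$ that follow from the first and third block rows of the eigenvalue equation, to show by explicit sign analysis that $p(\lambda) \le 0$ at any positive eigenvalue of $\sK$; combined with $p > 0$ on $(0, p_{\min}^+)$, this rules out any positive eigenvalue in that interval. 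Carrying out this sign analysis, and in particular verifying that the potentially indefinite coefficients in $p(\lambda) - \pi_P(\lambda)$ work out correctly under the given parameter constraints, is where the bulk of the technical effort will go.
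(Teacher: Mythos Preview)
Your treatment of the two extremal bounds is essentially the paper's argument. For the upper bound on negative eigenvalues, your Cauchy--interlacing route is a genuine and somewhat cleaner alternative to what the paper does: the paper instead writes $\sK^{-1}$ as $\begin{bmatrix}\sK_2^{-1}&0\\0&0\end{bmatrix}$ plus a positive semidefinite correction, so that the negative eigenvalues of $\sK^{-1}$ are bounded below by those of $\sK_2^{-1}$, and then invokes the Silvester--Wathen bound on $\sK_2$. Interlacing gets to the same conclusion with less machinery.

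The lower positive bound is where your plan has a real gap. Your scalar reduction to the $3\times 3$ matrix $P$ records only $a\in[\mu_{\min}^A,\mu_{\max}^A]$, $d\in[\mu_{\min}^D,\mu_{\max}^D]$, $e\in[\mu_{\min}^E,\mu_{\max}^E]$, and the Cauchy--Schwarz bounds $|\hat b|\le\sigma_{\max}^B$, $|\hat c|\le\sigma_{\max}^C$. Nowhere does $\sigma_{\min}^C$ enter: $\hat c=z^TCy/(\|y\|\,\|z\|)$ can be arbitrarily small even when $C$ has full row rank, so no amount of sign analysis on $\pi_P(\lambda)$ and $p(\lambda)$ over this constraint set can force the $\sigma_{\min}^C$-dependent conclusion $p(\lambda)\le 0$. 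The ``forced identities'' are just the first and third rows of $Pw=\lambda w$ and add nothing beyond the statement that $\lambda$ is an eigenvalue of $P$ with a componentwise positive eigenvector; in particular they do not supply the missing lower bound on $|\hat c|$.

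The paper closes this gap with two ideas you are not using. First, it reduces to the case $E=0$ by monotonicity: since $\sK\succeq\sK_{E=0}$ and both matrices have the same inertia, the smallest positive eigenvalue of $\sK$ dominates that of $\sK_{E=0}$. Second, and this is the crucial step, it keeps the argument at the \emph{vector} level rather than collapsing to scalars: with $E=0$ the third block row gives $z=\tfrac{1}{\lambda}Cy$, and the first row gives $x=(\lambda I-A)^{-1}B^Ty$ (in the regime $\lambda<\mu_{\min}^A$). Substituting both into the second row produces an inequality in $y$ alone involving $y^TC^TCy$, and it is this quadratic form that legitimately introduces $\sigma_{\min}^C$. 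Rearranging yields exactly $p(\lambda)\le 0$; the case $\lambda\ge\mu_{\min}^A$ is then handled by the direct check $p(\mu_{\min}^A)<0$.
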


\begin{proof}
	{\fbox {\em Upper bound on positive eigenvalues.} }
	We let $v = \begin{bmatrix} x^T & y^T & z^T \end{bmatrix}^T$ be a vector with $x \in \mathbb{R}^n$, $y \in \mathbb{R}^m$, and $z \in \mathbb{R}^p$. Because $\sK$ is symmetric, we can derive an upper bound on the eigenvalues of $\sK$ by bounding the value of $\frac{v^T \sK v}{v^T v}$. We can write
	\begin{equation}
		\label{eqn:vtkv}
		v^T \sK v = x^T A x - y^T D y + z^T E z + 2 x^T B^T y + 2 y^T C^T z.
	\end{equation}
	We use Cauchy-Schwarz to bound the mixed bilinear forms -- such as $x^T B^T y$ -- by, for example,
	$$
	-||x|| \cdot ||B^T y|| \le x^T B^T y \le ||x|| \cdot ||B^T y||.
	$$
	Using this and the eigenvalues/singular values of the block of $\sK$, we can bound \eqref{eqn:vtkv} from above by
	\begin{align*}
		v^T \sK v &\le \mu^A_{\max}||x||^2 - \mu^{D}_{\min}||y||^2 + \mu^E_{\max}||z||^2 + 2 \sigma^B_{\max} ||x|| \cdot ||y|| + 2 \sigma^C_{\max} ||y|| \cdot ||z|| \\
		&= \begin{bmatrix}
			||x|| & ||y|| & ||z||
		\end{bmatrix}
		\underbrace{\begin{bmatrix}\mu^A_{\max} & \sigma^B_{\max} & 0 \\
				\sigma^B_{\max} & -\mu^D_{\min} & \sigma^C_{\max} \\
				0 & \sigma^C_{\max} & \mu^E_{\max}
		\end{bmatrix}}_{=: R} \begin{bmatrix}
			||x|| \\
			||y|| \\
			||z||
		\end{bmatrix}.
	\end{align*}
	An upper bound on $\frac{v^T \sK v}{v^T v}$ is therefore given by the maximal eigenvalue of $R$. The largest positive eigenvalue of $\sK$ is therefore less than or equal to the largest root of the characteristic polynomial $\det{( \lambda I - R)}$, which yields the desired result.
	
	{\fbox {\em Lower bound on negative eigenvalues.} }
	The proof is similar to that for the upper bound on the positive eigenvalues. Using Cauchy-Schwarz and the eigenvalues/singular values of the blocks, we bound \eqref{eqn:vtkv} from below by writing
	\begin{align*}
		v^T \sK v &\ge \mu^A_{\min}||x||^2 - 2 \sigma^B_{\max}||x|| \cdot ||y|| - \mu^D_{\max}||y||^2 - 2 \sigma^C_{\max}||y|| \cdot ||z|| + \mu^E_{\min} ||z||^2 \\
		&= \begin{bmatrix}
			||x|| & ||y|| & ||z||
		\end{bmatrix}
		\underbrace{\begin{bmatrix}\mu^A_{\min} & -\sigma^B_{\max} & 0 \\
				-\sigma^B_{\max} & -\mu^D_{\max} & -\sigma^C_{\max} \\
				0 & -\sigma^C_{\max} & \mu^E_{\min}
		\end{bmatrix}}_{=: R} \begin{bmatrix}
			||x|| \\
			||y|| \\
			||z||
		\end{bmatrix}.
	\end{align*}
	A lower bound on $\frac{v^T \sK v}{v^T v}$ is therefore given by the smallest eigenvalue of $R$. Taking the characteristic polynomial of $R$ yields the stated result.
	
	{\fbox {\em Upper bound on negative eigenvalues.} }
	We derive an upper bound on the negative eigenvalues of $\sK$ by finding a lower bound on the negative eigenvalues of $\sK^{-1}$. We begin by partitioning $\sK$ as
	\begin{equation}
		\label{eqn:partition}
		\sK = \left[
		\begin{array}{c c | c }
			A & B^T & 0 \\
			B & -D & C^T \\
			\hline
			0 & C & E
		\end{array}
		\right]
		= \begin{bmatrix}
			\sK_2 & \bar{C}^T \\
			\bar{C} & E
		\end{bmatrix},
	\end{equation}
	where $\sK_2 = \begin{bmatrix} A & B^T \\ B & -D \end{bmatrix}$ and $\bar{C} = \begin{bmatrix} 0 & C \end{bmatrix}$. By \cite[Equation (3.4)]{bgl05},
	$$
	\sK^{-1} = \begin{bmatrix}
		\sK_2^{-1} + \sK_2^{-1} \bar{C}^T S_2^{-1} \bar{C} \sK_2^{-1} & -\sK_2^{-1} \bar{C} S_2^{-1} \\
		-S_2^{-1} \bar{C} \sK_2^{-1} & S_2^{-1}
	\end{bmatrix},
	$$
	where $S_2 = E + \bar{C} \sK_2^{-1} \bar{C}^T = E + C S_1^{-1}C^T$. Notice that
	$$
	\sK^{-1} = \begin{bmatrix}
		\sK_2^{-1} & 0 \\
		0 & 0
	\end{bmatrix} + \begin{bmatrix}
		\sK_2^{-1} \bar{C} \\
		-I
	\end{bmatrix}
	S_2^{-1}
	\begin{bmatrix}
		\bar{C}^T \sK_2^{-1} & -I
	\end{bmatrix}.
	$$
	Because the second term is positive semidefinite, we conclude that the eigenvalues of $\sK^{-1}$ are greater than or equal to the eigenvalues of $\begin{bmatrix} \sK_2^{-1} & 0 \\ 0 & 0\end{bmatrix}$. Thus, a lower bound on the negative eigenvalues of $\sK_2^{-1}$ is also a lower bound on the negative eigenvalues of $\sK^{-1}$. This means that an upper bound on the negative eigenvalues of $\sK_2$ is also an upper bound on the negative eigenvalues of $\sK$. The desired result now follows from Silvester and Wathen \cite[Lemma 2.2]{silvester94}. 
	
	{\fbox {\em Lower bound on positive eigenvalues.} } We begin by noting that, because $E$ is positive semidefinite, the eigenvalues of $\sK$ are greater than or equal to those of
	$$
	\sK_{E=0} = \begin{bmatrix}
		A & B^T & 0 \\
		B & -D & C^T \\
		0 & C & 0
	\end{bmatrix}.
	$$
	Moreover, $\sK$ and $\sK_{E=0}$ have the same inertia, by Theorem \ref{thm:inertia}. Thus, the smallest positive eigenvalue of $\sK$ is greater than or equal to the smallest positive eigenvalue of $\sK_{E=0}$. We therefore obtain a lower bound on the positive eigenvalues of $\sK$ by using energy estimates with $\sK_{E=0}$. The eigenvalue problem associated with $\sK_{E=0}$ is
	\begin{equation}
		\label{eqn:e0_eig_problem}
		\begin{bmatrix}
			A & B^T & 0 \\
			B & -D & C^T \\
			0 & C & 0
		\end{bmatrix}
		\begin{bmatrix}
			x \\
			y \\
			z
		\end{bmatrix} = \lambda \begin{bmatrix}
			x \\
			y \\
			z
		\end{bmatrix}.
	\end{equation}
	We pre-multiply the first block row by $x^T$, the second by $y^T$, and the third by $z^T$ to obtain
	\begin{subequations}
		\begin{align}
			x^T A x + x^T B^T y &=\lambda x^T x ;  \label{eq:e0_1} \\
			y^T B x - y^T D y + y^T  C^T z &= \lambda y^T y  ; \label{eq:e0_2} \\
			z^T C y  &= \lambda z^T z.  \label{eq:e0_3}
		\end{align}
		\label{eq:en_e0}
	\end{subequations}
	
	The third block row of \eqref{eqn:e0_eig_problem} gives $z = \frac{1}{\lambda}Cy$. The first block row gives
	$$
	B^T y = (\lambda I - A)x.
	$$
	We now consider two cases based on the value of $\lambda$.
	\paragraph{Case I: $\lambda < \mu_{\min}^A$:} If $\lambda < \mu_{\min}^A$, then $(\lambda I - A)$ is negative definite, so we can write
	$$
	x = (\lambda I - A)^{-1} B^T y.
	$$
	Substituting these into \eqref{eq:e0_2} and rearranging gives
	\begin{subequations}
		\begin{align}
			\lambda y^T y &= y^T B(\lambda I - A)^{-1}B^T y - y^T D y + \frac{1}{\lambda}y^T C^T C y \label{eq:nrg_y}\\
			&\ge \frac{(\sigma_{\max}^B)^2}{\lambda - \mu_{\min}^A} y^T y - \mu_{\max}^D y^T y + \frac{(\sigma_{\min}^C)^2}{\lambda}y^T y.
		\end{align}
	\end{subequations}
	Dividing by $y^T y$, using the fact that $\lambda > 0$ and $\lambda - \mu_{\min}^A <0$, and rearranging yields
	$$
	\underbrace{ \lambda^3 + (\mu_{\max}^D - \mu_{\min}^A)\lambda^2 - \left(\mu_{\min}^A \mu_{\max}^D + (\sigma_{\max}^B)^2 + (\sigma_{\min}^C)^2 \right)\lambda + \mu_{\min}^A(\sigma_{\min}^C)^2}_{=p(\lambda)} \le 0.
	$$
	By applying Corollary \ref{cor:cubic-poly} with $a = \mu_{\min}^A$, $b = \sigma_{\max}^B$, $c = \sigma_{\min}^C$, and $d = \mu_{\max}^D$, we know that this polynomial has two positive roots. Moreover, $p(\lambda)$ is negative between these two roots:
	this follows from the fact that $p(0) > 0$ and $\lim_{\lambda \rightarrow \infty}p(\lambda) = \infty$. Therefore, we conclude that in this case $\lambda$ is greater than or equal to the smaller positive root of $p(\lambda)$, namely $p^+_{\min}$.
	
	\paragraph{Case II: $\lambda \ge \mu_{\min}^A$:} If $\lambda \ge \mu_{\min}^A$, then $(\lambda I - A)$ may be indefinite (and possibly singular). However, we can still obtain a lower bound for $\lambda$ by observing that $\mu_{\min}^A$ is greater than $p^+_{\min}$. As stated earlier, $p(\lambda)$ has two positive roots and the value of $p(\lambda)$ is negative between those roots. Moreover, these are the only positive values of $\lambda$ for which $p(\lambda) < 0$. We observe, after simplification, that
	$$
	p(\mu_{\min}^A) = -(\sigma_{\max}^B)^2 \mu_{\min}^A < 0.
	$$
	Therefore, the bound $\lambda \ge p^+_{\min}$ also holds in this case, which completes the proof.
\end{proof}

\paragraph{Remark 1.} From Theorem~\ref{thm:bounds_unprec} we see that when $B$ and $C$  are rank deficient, the internal bounds are zero. Under mild conditions on the ranks and kernels of $D$ and $E$, the statement of the theorem and its proof may be revised to obtain nonzero internal bounds in this case. However, doing so is rather technical, and since the case of full rank $B$ and $C$ is common, further details on this end case are omitted.
\vspace{0.4cm}

The matrix $\sK_0$ is a special case of $\sK$, and bounds on its eigenvalues can be obtained as a direct consequence of Theorem \ref{thm:bounds_unprec}.

\begin{corollary}[Eigenvalue bounds, matrix $\sK_0$]
	\label{cor:bounds_unprec2}
	Define the following three cubic polynomials as special cases of $p, q$ and $r$ defined in \eqref{eq:pqr} with $D=E=0$:
	\begin{subequations}
		\begin{align}
			\hat{p}(\lambda) &=  \lambda^3  - \mu_{\min}^A \lambda^2 - \left( (\sigma_{\max}^B)^2 + (\sigma_{\min}^C)^2  \right)\lambda + \mu_{\min}^A(\sigma_{\min}^C)^2; 
			\label{eq:pt}\\
			\hat{q}(\lambda)  &=   \lambda^3 - \mu^A_{\max}  \lambda^2  -  \left(  (\sigma^B_{\max})^2 + (\sigma^C_{\max})^2  \right) \lambda  
			+  \mu_{A}^{\max}(\sigma^C_{\max})^2 ;	
			\label{eq:qt} \\
			\hat{r}(\lambda) &=  \lambda^3 - \mu^A_{\min} \lambda^2   - \left((\sigma^B_{\max})^2 + (\sigma^C_{\max})^2  \right) \lambda 
			+   \mu^A_{\min}(\sigma^C_{\max})^2 .
			\label{eq:rt}
		\end{align}
		\label{eq:ptqtrt}
	\end{subequations}
	Using  \eqref{eq:ptqtrt},
	the eigenvalues of $\sK_0$ are bounded within the intervals
	\begin{equation} 
		\left[\hat{r}^-, \frac{\mu^A_{\max} - \sqrt{(\mu^A_{\max})^2 + 4(\sigma^B_{\min})^2}}{2} \right] \ \bigcup \ \left[ \hat{p}^+_{\min}, \hat{q}^+_{\max} \right]. 
		\label{eq:bound2}
	\end{equation}
\end{corollary}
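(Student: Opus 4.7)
The plan is to derive the corollary as a direct specialization of Theorem~\ref{thm:bounds_unprec}, since $\sK_0$ is exactly the matrix $\sK$ with the choice $D=0$ and $E=0$. First I would verify that the hypotheses of Theorem~\ref{thm:bounds_unprec} are indeed inherited in this degenerate setting: the standing assumption throughout the paper is that $A$, $S_1$, and $S_2$ are positive definite, and with $D=E=0$ we have $S_1 = BA^{-1}B^T$ and $S_2 = CS_1^{-1}C^T$, which are positive definite precisely when $B$ and $C$ have full row rank. I would note this as the implicit requirement under which the corollary applies (this is consistent with Remark~1, which flags the rank-deficient $B,C$ case as degenerate).

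Once the hypotheses are secured, the argument reduces to a substitution check. I would set $\mu_{\max}^D = \mu_{\min}^D = 0$ and $\mu_{\max}^E = \mu_{\min}^E = 0$ in the definitions \eqref{eq:p}, \eqref{eq:q}, \eqref{eq:r} of $p,q,r$, and verify term-by-term that the resulting cubic polynomials coincide with $\hat{p}, \hat{q}, \hat{r}$ defined in \eqref{eq:ptqtrt}. For example, in $p(\lambda)$ the coefficient of $\lambda^2$ collapses from $\mu_{\max}^D - \mu_{\min}^A$ to $-\mu_{\min}^A$, the coefficient of $\lambda$ loses the $\mu_{\min}^A \mu_{\max}^D$ contribution, and the constant term $\mu_{\min}^A(\sigma_{\min}^C)^2$ is unchanged. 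Analogous simplifications hold for $q$ and $r$: products involving $\mu_{\min}^D$ or $\mu_{\min}^E$ vanish, leaving precisely $\hat{q}$ and $\hat{r}$.

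With these identifications in hand, I would invoke Corollary~\ref{cor:cubic-poly} applied to $\hat{p}, \hat{q}, \hat{r}$ to confirm that each of the three specialized polynomials still has two positive and one negative real root (this is where the rank conditions on $B$ and $C$ are needed, to ensure $s_1, s_2 > 0$ in the hypothesis of the corollary). Then the interval bound \eqref{eq:bound} of Theorem~\ref{thm:bounds_unprec} becomes verbatim \eqref{eq:bound2}, with the hatted roots in place of the unhatted ones, and the upper endpoint of the negative interval $\frac{\mu^A_{\max} - \sqrt{(\mu^A_{\max})^2 + 4(\sigma^B_{\min})^2}}{2}$ unaffected by the choice of $D$ and $E$.

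There is no real obstacle here beyond careful bookkeeping; the only subtle point worth stating explicitly is that the corollary implicitly presupposes full row rank of $B$ and $C$ (so that $S_1$ and $S_2$ remain positive definite), and I would make this assumption transparent at the start of the proof before performing the substitution.
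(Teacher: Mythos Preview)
Your proposal is correct and matches the paper's approach: the paper simply omits the proof, stating it is ``similar to the proof of Theorem~\ref{thm:bounds_unprec}, with simplifications arising from setting $D=E=0$,'' which is exactly the substitution-and-specialization argument you outline. Your explicit remark that full row rank of $B$ and $C$ is implicitly required (so that $S_1$ and $S_2$ remain positive definite) is a useful clarification consistent with Remark~1.
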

The proof of Corollary \ref{cor:bounds_unprec2} is omitted; it is similar to the proof of Theorem \ref{thm:bounds_unprec}, with simplifications arising from setting $D=E=0.$

\subsection{Tightness of the bounds}
While the tightness of the bounds we have obtained depends on the problem at hand, all bounds presented in this section are attainable, as we will demonstrate with small examples.
For the extremal bounds (upper positive and lower negative), we note that both bounds hold for all double saddle-point matrices with $n = m = p = 1$, as the characteristic polynomial of the $3 \times 3$ matrix is the same as the polynomial given in the upper positive and lower negative bounds of Theorem \ref{thm:bounds_unprec}.

For the upper bound on negative eigenvalues, we consider the example (with $n=m=2$, and $p=1$):
$$
\sK = \left[\begin{array}{c c | c c | c }
	\mu_{\max}^A & 0 & \sigma_{\min}^B & 0 & 0 \\
	0 & \mu_{\max}^A & 0 & \sigma_{\min}^B & 0 \\
	\hline
	\sigma_{\min}^B & 0 & 0 & 0 & 0 \\
	0 & \sigma_{\min}^B & 0 & -\mu^D & \sigma^C \\
	\hline
	0 & 0 & 0 & \sigma^C & \mu^E
\end{array}.
\right]
$$
We can permute the rows and columns of $\sK$ to obtain a block diagonal matrix:
$$
\sK_D = \left[\begin{array}{c c | c c c }
	\mu_{\max}^A & \sigma_{\min}^B & 0 & 0 & 0 \\
	\sigma_{\min}^B & 0 & 0 & 0 & 0 \\
	\hline
	0 & 0 & \mu_{\max}^A & \sigma_{\min}^B & 0 \\
	0 & 0 & \sigma_{\min}^B & -\mu^D & \sigma^C \\
	0 & 0 & 0 & \sigma^C & \mu^E
\end{array}
\right].
$$
The upper left block of $\sK_D$ has as an eigenvalue $\frac{\mu^A_{\max} - \sqrt{(\mu^A_{\max})^2 + 4(\sigma^B_{\min})^2}}{2}$, which is the bound given by Theorem \ref{thm:bounds_unprec}.

Finally, for the lower positive bound, we consider the matrix (with $n=m=p=2$)
$$
\sK = \left[\begin{array}{ c c | c c | c c}
	\mu_{\min}^A & 0 & \sigma_{\max}^B & 0 & 0 & 0 \\
	0 & \mu_{\min}^A & 0 & \sigma_{\max}^B & 0 & 0 \\
	\hline
	\sigma_{\max}^B & 0 & -\mu_{\max}^D & 0 & \sigma_{\min}^C & 0 \\
	0 & \sigma_{\max}^B & 0 & -\mu_{\max}^D & 0 & \sigma_{\min}^C \\
	\hline
	0 & 0 & \sigma_{\min}^C & 0 & 0 & 0 \\
	0 & 0 & 0 & \sigma_{\min}^C & 0 & \mu^E
\end{array}\right].
$$
As in the previous example, we can permute the rows and columns to obtain a block diagonal matrix:
$$
\sK_D = \left[ \begin{array}{c c c | c c c}
	\mu_{\min}^A & \sigma_{\max}^B & 0 & 0 & 0 & 0\\
	\sigma_{\max}^B & -\mu_{\max}^D & \sigma_{\min}^C & 0 & 0 & 0 \\
	0 & \sigma_{\min}^C & 0 & 0 & 0 & 0 \\
	\hline
	0 & 0 & 0 & \mu_{\min}^A & \sigma_{\max}^B & 0 \\
	0 & 0 & 0 & \sigma_{\max}^B & -\mu_{\max}^D & \sigma_{\min}^C \\
	0 & 0 & 0 & 0 & \sigma_{\min}^C & \mu^E
\end{array}
\right].
$$
The characteristic polynomial of the upper left block is precisely $p(\lambda)$ defined in \eqref{eq:p}; thus, the bound of Theorem \ref{thm:bounds_unprec} is obtained.

\section{Eigenvalue bounds for $\sM^{-1} \sK$}
\label{sec:prec}

We now derive bounds for the preconditioned matrix $\sM^{-1} \sK$, with $\sM$ defined in \eqref{eq:M}.

\subsection{Inertia and eigenvalue multiplicity}
\label{sec:mk_inertia}
We begin with some observations on inertia and eigenvalue multiplicity. To simplify the presentation and proof of this result, we restrict ourselves to the case that $B$ and $C$ have full row rank. In some later results, we will lift this restriction to allow for rank-deficient $B$ and $C$.
\begin{theorem}[Inertia and algebraic multiplicity, matrix $\sM^{-1}\sK$]
	\label{thm:inertia} Let $\sK$ be defined as in \eqref{eqn:2spdef} and $\sM$ as in \eqref{eq:M}, and suppose that $B$ and $C$ have full row rank. The preconditioned matrix $\sM^{-1}\sK$ has:
	\begin{enumerate}[label=(\alph*)]
		\item[(i)] $m$ negative eigenvalues;
		\item[(ii)] $p$ eigenvalues in $(0,1)$;
		\item[(iii)] $n-m$ eigenvalues equal to $1$; and
		\item[(iv)] $m$ eigenvalues greater than $1$.
	\end{enumerate}
\end{theorem}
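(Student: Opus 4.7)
My approach combines three ingredients: a reduction to a symmetric spectrum via congruence, a direct kernel computation for the unit eigenvalues, and a block LDL$^T$ reduction of $\sK-\sM$ that counts positive eigenvalues above and below $1$. Since $\sM$ is symmetric positive definite, $\sM^{-1}\sK$ is similar to the symmetric matrix $\hat{\sK}:=\sM^{-1/2}\sK\sM^{-1/2}$, so its spectrum is real and semisimple. By Sylvester's law of inertia, $\hat{\sK}$ is congruent to $\sK$ and hence shares its inertia; Lemma~\ref{lem:inertia1} gives (i). Moreover $\hat{\sK}-I$ is congruent (via $\sM^{1/2}$) to $\sK-\sM$, so the inertia of $\sK-\sM$ controls the eigenvalue counts of $\hat{\sK}$ in $(-\infty,1)$, $\{1\}$, and $(1,\infty)$; the task becomes to evaluate this inertia and interpret it correctly.

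\textbf{Unit eigenvalues (part (iii)).} I will count $\dim\ker(\sK-\sM)$ directly from the block equations. Writing $v=(x,y,z)$, the first block of $(\sK-\sM)v=0$ reads $B^Ty=0$, and full row rank of $B$ forces $y=0$. Substituting into the third block and using $E-S_2=-CS_1^{-1}C^T$ yields $CS_1^{-1}C^Tz=0$, whence $\|S_1^{-1/2}C^Tz\|^2=0$ and so $C^Tz=0$; full row rank of $C$ then forces $z=0$. The second block collapses to $Bx=0$, whose solution space has dimension $n-m$. Semisimplicity makes this both the geometric and the algebraic multiplicity.

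\textbf{Inertia of $\sK-\sM$ (parts (ii) and (iv)).} I plan a two-stage block factorization. Setting $T:=D+S_1$ (symmetric positive definite under the standing assumptions) and permuting rows and columns to place $-T$ as the leading pivot, Haynsworth's formula contributes inertia $(0,m,0)$ and leaves the Schur complement
\[
W \;=\; \begin{bmatrix} B^T \\ C \end{bmatrix} T^{-1} \begin{bmatrix} B & C^T \end{bmatrix} \;-\; \diag\bigl(0,\,CS_1^{-1}C^T\bigr).
\]
The $(1,1)$ block $B^TT^{-1}B$ of $W$ has rank only $m$, so a direct second pivot is unavailable. To overcome this, I will apply an orthogonal congruence in the first $n$ coordinates adapted to the decomposition $\mathbb{R}^n=\mathrm{range}(B^T)\oplus\ker(B)$: the $\ker(B)$-block decouples and contributes $n-m$ zero eigenvalues, leaving an $(m+p)\times(m+p)$ core. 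Writing $N:=BV_1$ for the invertible $m\times m$ restriction of $B$ to an orthonormal basis $V_1$ of $\mathrm{range}(B^T)$ and applying a further congruence by $\diag(N^{-1},I_p)$, the core reduces to
\[
\begin{bmatrix} T^{-1} & T^{-1}C^T \\ CT^{-1} & CT^{-1}C^T-CS_1^{-1}C^T \end{bmatrix}.
\]
A final Schur complement on the SPD pivot $T^{-1}$ telescopes, through the cancellation $CT^{-1}C^T-CT^{-1}C^T=0$, to the negative definite matrix $-CS_1^{-1}C^T$, giving core inertia $(m,p,0)$ and hence inertia $(m,\,m+p,\,n-m)$ for $\sK-\sM$.

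\textbf{Combining and main obstacle.} Under the shift $\lambda\mapsto\lambda-1$, the $m$ positive eigenvalues of $\hat{\sK}-I$ correspond to the eigenvalues of $\hat{\sK}$ in $(1,\infty)$, proving (iv); the $n-m$ zeros match (iii); and the $m+p$ negatives comprise the $m$ strictly negative eigenvalues of $\hat{\sK}$ from (i) plus exactly $p$ eigenvalues in $(0,1)$, establishing (ii). The main technical obstacle is the middle stage: the rank deficiency of $B^TT^{-1}B$ blocks a straightforward Schur pass, and I expect the correct reduction to require both the orthogonal split adapted to $\ker(B)$ and the subsequent algebraic telescope that yields $-CS_1^{-1}C^T$ — without that cancellation, the $(m,p)$ count on the core would not fall out cleanly.
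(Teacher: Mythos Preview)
Your proposal is correct. Both you and the paper reduce the problem to computing the inertia of the shifted matrix (yours is $\sK-\sM$, the paper's is the congruent matrix $\sM^{-1/2}\sK\sM^{-1/2}-I$), and both exploit the same key identity $E-S_2=-CS_1^{-1}C^T$ (equivalently $\tilde E-I=-\tilde C\tilde C^T$). The difference is in how the inertia is extracted. The paper observes that the shifted matrix has a zero $(1,1)$ block and invokes Gould's inertia lemma for bordered matrices, which immediately gives $\In=(m,m,n-m)+\In(N^T\mathcal{T}N)$ with $N^T\mathcal{T}N=-\tilde C\tilde C^T$; one line then finishes. You instead carry out a two-stage Haynsworth reduction by hand: pivot on the SPD block $-T=-(D+S_1)$, split off $\ker(B)$ orthogonally to isolate $n-m$ zeros, and then pivot on $T^{-1}$ so that the $CT^{-1}C^T$ terms telescope to leave $-CS_1^{-1}C^T$. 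Your route is more elementary and self-contained (no external inertia lemma), at the cost of the explicit rank-deficiency workaround; the paper's route is shorter but relies on a cited result that does essentially the same splitting internally. Note also that your separate kernel computation for (iii) is correct but slightly redundant, since the $n-m$ zeros already fall out of your inertia calculation.
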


\begin{proof}
	Because $\sM$ is symmetric positive definite, $\sM^{1/2}$ exists and is invertible, and the inertias of $\sM^{-1/2}\sK \sM^{-1/2}$
	and $\sM^{-1}\sK$ are equal to the inertia of $\sK$.	 Thus, Lemma \ref{lem:inertia1} establishes that $\sM^{-1}\sK$ has $n+p$ positive and $m$ negative eigenvalues, which proves (i).
	
	For (ii)-(iv) we split the $n+p$ positive eigenvalues into eigenvalues less than, equal to, or greater than 1. For this we compute the inertia of the shifted, split-preconditioned matrix
	$$
	\sM^{-1/2}\sK \sM^{-1/2} - I = \begin{bmatrix}
		0 & \tilde{B}^T & 0 \\
		\tilde{B} & -\tilde{D}-I & \tilde{C}^T \\
		0 & \tilde{C} & \tilde{E} - I
	\end{bmatrix},
	$$
	where $\tilde{B} = S_1^{-1/2}BA^{-1/2}$, $\tilde{C}= S_2^{-1/2}CS_1^{-1/2}$, $\tilde{D} = S_1^{-1/2}DS_1^{-1/2}$, $\tilde{E} = S_2^{-1/2}ES_2^{-1/2}$. The number of positive, negative, and zero eigenvalues of $\sM^{-1/2}\sK \sM^{-1/2} - I$ will be equal to the number of eigenvalues of $\sM^{-1}\sK$ greater than, less than, or equal to 1, respectively.
	
	Noting that $\tilde{E} + \tilde{C}\tilde{C}^T = I$, we write 
	$$
	\sM^{-1/2}\sK \sM^{-1/2} - I = \begin{bmatrix}
		0 & \mathcal{B} \\
		\label{eqn:defT}
		\mathcal{B}^T & \mathcal{T}
	\end{bmatrix},
	$$
	where
	$$
	\mathcal{T} := \begin{bmatrix}
		-\tilde{D}-I & \tilde{C}^T \\
		\tilde{C} & -\tilde{C}\tilde{C}^T
	\end{bmatrix}
	\ \ \ \textrm{and} \ \ \
	\mathcal{B} = \begin{bmatrix}
		\tilde{B}^T & 0
	\end{bmatrix}.
	$$
	The matrix $\mathcal{B}$ is in $\mathbb{R}^{n \times (m+p)}$ and has rank $m$. We define a matrix
	$$
	N := \begin{bmatrix}
		\textbf{0}_{m \times p} \\
		I_p
	\end{bmatrix},
	$$
	where $\textbf{0}_{m \times p}$ is the $m \times p$ zero matrix and $I_p$ is the $p \times p$ identity matrix. The columns of $N$ form a basis for $\ker(\mathcal{B})$. Denote the inertia of $M$
	by $\In(M) = (n_{+}, n_{-}, n_{0})$ . It is well known (see, e.g., \cite[Lemma 3.4]{gould85}) that
	$$
	\In(\sM^{-1/2}\sK \sM^{-1/2} - I) = \In(N^T \mathcal{T} N) + (m, m, n-m).
	$$
	Because $N^T \mathcal{T}N = -\tilde{C}\tilde{C}^T$ is negative definite, this gives $\In(\sM^{-1/2}\sK \sM^{-1/2} - I) = (m, m+p, n-m)$, which yields (ii)-(iv).
\end{proof}

\subsection{Derivation of bounds}
\label{sec:prec_eig}

It is possible to obtain eigenvalue bounds on the preconditioned system $\sM^{-1}\sK$ by using the results of Theorem \ref{thm:bounds_unprec} on the (symmetric) preconditioned system $\sM^{-1/2}\sK\sM^{-1/2}$; however, some of the resulting bounds will be loose. The reason for this is that Theorem \ref{thm:bounds_unprec} assumes no relationships between the blocks of $\sK$ and considers each block individually. In this section, we will derive tight eigenvalue bounds using energy estimates, which allow us to fully exploit the relationships between the blocks of the preconditioned system.

We begin by recalling the following result from Horn and Johnson \cite[Theorem 7.7.3]{hj85}, which will be useful throughout the analysis that follows.
\begin{lemma}
	Let $M$ and $N$ be symmetric positive semidefinite matrices such that $M+N$ is positive definite. Then $(M+N)^{-1}M$ is symmetric positive semidefinite with all eigenvalues in $[0,1]$.
	\label{lem:inv_times_sum}
\end{lemma}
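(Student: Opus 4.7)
The statement really concerns the eigenvalues of $(M+N)^{-1}M$, which is itself not symmetric in general, so my first move would be to pass to a genuinely symmetric matrix with the same spectrum. Since $M+N$ is symmetric positive definite, its symmetric positive definite square root $(M+N)^{1/2}$ exists and is invertible. Then
\[
(M+N)^{-1}M \;=\; (M+N)^{-1/2}\bigl[(M+N)^{-1/2} M (M+N)^{-1/2}\bigr](M+N)^{1/2},
\]
so $(M+N)^{-1}M$ is similar to the symmetric matrix
\[
H \;:=\; (M+N)^{-1/2} M (M+N)^{-1/2}.
\]
Because similar matrices share eigenvalues, it suffices to show that $H$ is symmetric positive semidefinite with spectrum in $[0,1]$.

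For the lower bound, I would observe that $H$ is of the form $X^T M X$ with $X = (M+N)^{-1/2}$, which is a congruence of the PSD matrix $M$, hence PSD. Equivalently, for any $y$, $y^T H y = \bigl((M+N)^{-1/2}y\bigr)^T M \bigl((M+N)^{-1/2}y\bigr) \ge 0$. This gives $\lambda_{\min}(H) \ge 0$, and in turn shows that $(M+N)^{-1}M$ is similar to a PSD matrix (though not itself PSD unless $M$ and $N$ commute).

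For the upper bound, the key observation is the Loewner inequality $M \preceq M + N$, which holds because $N \succeq 0$. Conjugating both sides by $(M+N)^{-1/2}$ (a valid operation that preserves the Loewner order) yields
\[
H \;=\; (M+N)^{-1/2} M (M+N)^{-1/2} \;\preceq\; (M+N)^{-1/2}(M+N)(M+N)^{-1/2} \;=\; I.
\]
Therefore $\lambda_{\max}(H) \le 1$, completing the bound.

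There is really no serious obstacle here; the only subtlety is the reminder that $(M+N)^{-1}M$ itself need not be symmetric, so one must do the similarity transformation before invoking the variational characterization or the Loewner order. Everything else is a standard two-line argument once the square root of $M+N$ is introduced.
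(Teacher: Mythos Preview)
Your argument is correct and is exactly the standard one: pass by similarity to $H=(M+N)^{-1/2}M(M+N)^{-1/2}$, which is symmetric, and then use $0\preceq M\preceq M+N$ to get $0\preceq H\preceq I$. Your remark that $(M+N)^{-1}M$ is not itself symmetric in general (despite the wording of the lemma) is well taken; what is really being asserted and used later in the paper is only that its eigenvalues are real and lie in $[0,1]$, which your similarity argument delivers.

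There is nothing to compare against: the paper does not prove this lemma at all but simply quotes it from Horn and Johnson \cite[Theorem 7.7.3]{hj85}. Your write-up is a perfectly good self-contained proof of the result as it is actually used.
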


\subsection*{The case $D=E=0$}
\label{sec:prec_eig_unreg}
When $D$ and $E$ are both zero, $\sM^{-1} \sK$ has six distinct eigenvalues given by: $1$, $\frac{1 \pm \sqrt{5}}{2}$, and the roots of the cubic polynomial $\lambda^3 -\lambda^2 -2\lambda + 1$. We refer to Cai et al. \cite[Theorem 2.2]{cai21} for proof.

\subsection*{The case $D = 0$ and $E \ne 0$}
When $D = 0$, it is necessary that $B$ have full row rank in order for $S_1$ to be invertible; however, $C$ may be rank-deficient. Suppose that $C^T$ has nullity $k$. The following result holds.

\begin{theorem}[Eigenvalue bounds, matrix $\sM^{-1}\sK$, $D = 0$, $E \ne 0$]
	\label{thm:bounds_prec_d0}
	When $D = 0$ and $E \ne 0$, the eigenvalues of $\mathcal{M}^{-1}\mathcal{K}$ are given by: $\lambda = 1$ with multiplicity $n-m+k$; $\lambda = \frac{1 \pm \sqrt{5}}{2}$, each with multiplicity $m-p+k$; and $p-k$ eigenvalues located in each of the three intervals: $\left[ \zeta^-, \frac{1 - \sqrt{5}}{2} \right)$, $\left[ \zeta_{\min}^+, 1 \right)$, and $\left( \frac{1 + \sqrt{5}}{2}, \zeta_{\max}^+ \right]$, where $\zeta^-$, $\zeta_{\min}^+$ and $\zeta_{\max}^+$ are the roots of the cubic polynomial $\lambda^3 - \lambda^2 - 2\lambda + 1$, ordered from smallest to largest. (These are approximately $-1.2470$, $0.4450$, and $1.8019$, respectively.)
\end{theorem}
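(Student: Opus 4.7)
The plan is to work with the similar symmetric matrix $\hat{\sK} := \sM^{-1/2}\sK\sM^{-1/2}$, which has the same spectrum as $\sM^{-1}\sK$ and whose blocks satisfy two key identities: $\tilde{B}\tilde{B}^T = I$ (since $D = 0$ gives $S_1 = BA^{-1}B^T$) and $\tilde{C}\tilde{C}^T + \tilde{E} = I$. Writing the three block-rows of $\hat{\sK} v = \lambda v$, for $\lambda \ne 1$ I would solve the first row for $x = \tilde{B}^T y / (\lambda - 1)$, and for $\lambda \ne \phi_\pm := (1\pm\sqrt{5})/2$ (the roots of $\lambda^2 - \lambda - 1$) further solve the second row for $y = \frac{\lambda-1}{\lambda^2 - \lambda - 1}\tilde{C}^T z$. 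Substituting into the third row and using $\tilde{C}\tilde{C}^T = I - \tilde{E}$ collapses the system to the single matrix identity
\[
\big(\lambda^3 - \lambda^2 - 2\lambda + 1\big)\, z \;=\; \lambda(\lambda - 2)\,\tilde{E}\,z,
\]
which forces $z$ to be an eigenvector of $\tilde{E}$ with some eigenvalue $\mu$, and $\lambda$ to be a root of $q_\mu(\lambda) := \lambda^3 - (1+\mu)\lambda^2 + 2(\mu - 1)\lambda + 1$.

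The three excluded values of $\lambda$ are handled directly. For $\lambda = 1$ the first row forces $\tilde{B}^T y = 0$, hence $y = 0$ since $\tilde{B}\tilde{B}^T = I$; the remaining rows then demand $x \in \ker\tilde{B}$ (dimension $n - m$) and $z \in \ker\tilde{C}^T$ (dimension $k$), giving an $(n - m + k)$-dimensional eigenspace. For $\lambda = \phi_\pm$, the elimination step forces $\tilde{C}^T z = 0$; the third row then gives $\tilde{C}y = (\lambda - 1)z$, and since $\mathrm{range}(\tilde{C}) = (\ker\tilde{C}^T)^\perp$ we must have $z = 0$. This leaves $y \in \ker\tilde{C}$ (dimension $m - p + k$) and $x = \tilde{B}^T y/(\lambda - 1)$, yielding the claimed multiplicities.

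For the generic eigenvalues, the identity $\tilde{E} = I - \tilde{C}\tilde{C}^T$ with $\mathrm{rank}(\tilde{C}) = p - k$ tells us that $\tilde{E}$ has eigenvalue $1$ with multiplicity $k$ (whose eigenvectors were already absorbed above) and $p - k$ eigenvalues in $[0, 1)$. For each such $\mu$, writing $q_\mu(\lambda) = q(\lambda) - \mu\lambda(\lambda - 2)$ and evaluating at the test points gives $q_\mu(0) = 1$, $q_\mu(1) = \mu - 1$, $q_\mu(\phi_\pm) = (1-\mu)(1 - \phi_\pm)$, and $q_\mu(\zeta^\pm) = -\mu\,\zeta^\pm(\zeta^\pm - 2)$. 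The resulting sign pattern, together with the cubic's growth at $\pm\infty$, pins down exactly one root in each of $[\zeta^-, \phi_-)$, $[\zeta^+_{\min}, 1)$, and $(\phi_+, \zeta^+_{\max}]$, with the endpoints $\zeta^\pm$ attained precisely when $\mu = 0$. Summing contributions over the $p - k$ values of $\mu$ gives $p - k$ eigenvalues in each interval, and the total $(n-m+k) + 2(m-p+k) + 3(p-k) = n + m + p$ confirms that everything is accounted for.

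The main obstacle I anticipate is bookkeeping rather than any single calculation: the generic construction produces three eigenvectors per eigenvector of $\tilde{E}$, and one must verify these are linearly independent of each other and of the $\lambda \in \{1, \phi_\pm\}$ eigenvectors. The cleanest way to organize this is to use the orthogonal decomposition $\mathbb{R}^p = \ker\tilde{C}^T \oplus \mathrm{range}(\tilde{C})$, which respects $\tilde{E}$ because $\tilde{E}$ commutes with the orthogonal projector onto $\ker\tilde{C}^T$; the first summand then accounts for the special eigenvectors and the second parameterizes the generic ones via the injective map $z \mapsto (x(\lambda, z), y(\lambda, z), z)$, so no double counting occurs.
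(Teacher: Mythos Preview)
Your approach is essentially the same as the paper's: eliminate $x$ and $y$ from the block eigenvalue equations to reduce to a one-parameter family of cubics indexed by the spectrum of a $p\times p$ matrix, then locate the roots. The only cosmetic differences are that the paper works with $\sM^{-1}\sK$ directly and parametrizes by the eigenvalues $\delta_j\in(0,1]$ of $S_2^{-1}CS_1^{-1}C^T$, whereas you use the symmetrized matrix and the eigenvalues $\mu\in[0,1)$ of $\tilde E=I-\tilde C\tilde C^T$; since $\mu=1-\delta$, your cubic $q_\mu$ coincides with the paper's $\lambda^3-(2-\delta)\lambda^2-2\delta\lambda+1$, and your explicit sign analysis at $\zeta^\pm,\phi_\pm,0,1$ is a more detailed version of the paper's one-line ``substituting $\delta_j=0$ and $\delta_j=1$'' step.
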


\begin{proof}
	We write out the (left-)preconditioned operator
	$$
	\sM^{-1}\sK = \begin{bmatrix}
		I & A^{-1}B^T & 0 \\
		S_1^{-1}B & 0 & S_1^{-1}C^T \\
		0 & S_2^{-1}C & S_2^{-1}E
	\end{bmatrix},
	$$
	with corresponding eigenvalue equations
	\begin{subequations}
		\begin{align}
			\label{eqn:eig_kd0_1}
			x + A^{-1}B^Ty &= \lambda x; \\
			\label{eqn:eig_kd0_2}
			S_1^{-1}Bx + S_1^{-1}C^T z &= \lambda y; \\
			\label{eqn:eig_kd0_3}
			S_2^{-1}C y + S_2^{-1}E z &= \lambda z.
		\end{align}
	\end{subequations}
	
	We obtain $n-m$ eigenvectors for $\lambda = 1$ by choosing $x \in \ker(B)$ and $y,z=0$. By considering $y \in \ker(C)$ and $z = 0$, we obtain eigenvalues $\lambda = \frac{1 \pm \sqrt{5}}{2}$, each with geometric multiplicity $m - p + k$.
	
	For the remaining eigenvalues, we assume that $z \ne 0$ and $\lambda \notin \{ 1, \frac{1 \pm \sqrt{5}}{2}\}$. From \eqref{eqn:eig_kd0_1} we obtain
	$$
	x = \frac{1}{\lambda-1} A^{-1}B^T y,
	$$
	which we substitute into \eqref{eqn:eig_kd0_2} and rearrange to get
	$$
	y = \frac{\lambda - 1}{\lambda^2 - \lambda - 1}S_1^{-1}C^T z.
	$$
	Substituting this into \eqref{eqn:eig_kd0_3} gives
	\begin{equation}
		\label{eqn:s2z}
		\frac{\lambda - 1}{\lambda^2 - \lambda - 1}S_2^{-1}CS_1^{-1}C^T z + S_2^{-1} E z = \lambda z.
	\end{equation}
	Because $S_2 = E + CS_1^{-1}C^T$, we can write $S_2^{-1} E = I - S_2^{-1}CS_1^{-1}C^T$. We substitute this into \eqref{eqn:s2z} and rearrange to obtain
	$$
	\bigg(-\lambda^2 + 2 \lambda \bigg)S_2^{-1}CS_1^{-1}C^T z = \bigg(\lambda^3 - 2\lambda^2 + 1 \bigg)z.
	$$
	Let $\{\delta_j, v_j\}$, for $1 \le j \le p$, denote an eigenpair of $S_2^{-1}CS_1^{-1}C^T$. For $k$ of these eigenpairs corresponding to $v_j \in \ker(C^T)$, we note that
	$$
	S_2^{-1}Ev_j = (I-S_2^{-1}CS_1^{-1}C^T)v_j = v_j,
	$$
	and therefore $\begin{bmatrix} 0 & 0 & v_j^T \end{bmatrix}^T$ is an eigenvector of $\sM^{-1}\sK$ with $\lambda = 1$.
	For the $p-k$ remaining eigenpairs, we have $0 < \delta_j \le 1$ (by Lemma \ref{lem:inv_times_sum}).  We can then write
	$$
	\bigg(-\lambda_j^2 + 2 \lambda_j \bigg)\delta_j z_j = \bigg(\lambda_j^3 - 2\lambda_j^2 + 1 \bigg)z_j,
	$$
	where $\lambda_j$ corresponds to an eigenpair $\{ \delta_j, z_j\}$ of $S_2^{-1}CS_1^{-1}C^T$. Because $z_j \ne 0$ this implies that
	\begin{equation}
		\label{eqn:cubic_j}
		\lambda_j^3 - (2-\delta_j)\lambda_j^2 - 2\delta_j \lambda_j + 1 = 0.
	\end{equation}
	Thus, each of the $p-k$ positive eigenvalues $\delta_j$ of $S_2^{-1}CS_1^{-1}C^T$ yields three distinct corresponding eigenvalues $\lambda_j^{(1)}, \lambda_j^{(2)}$, and $\lambda_j^{(3)}$ of $\mathcal{M}^{-1}\mathcal{K}$, corresponding to the roots of the cubic polynomial \eqref{eqn:cubic_j}. These $3(p-k)$ eigenvalues, combined with the eigenvalues described earlier, account for all eigenvalues of $\mathcal{M}^{-1}\mathcal{K}$. Substituting $\delta_j = 0$ and $\delta_j = 1$ into \eqref{eqn:cubic_j} gives us the three intervals for these eigenvalues stated in the theorem.
\end{proof}

\subsection*{The case $D \ne 0$ and $E = 0$}
When $D \ne 0$ and $E = 0$, the bounds are the same as when $D$ and $E$ are both nonzero, which are given next.

\subsection*{The case $D, E \ne 0$}
\label{sec:prec_eig_reg}

\begin{theorem} [Eigenvalue bounds,  matrix $\sM^{-1}\sK$, $D, E \ne 0$]
	\label{thm:bounds_prec}
	The eigenvalues of $\sM^{-1}\sK$ are bounded within the intervals 
	$$
	\left[ -\frac{1+\sqrt5}{2}, \frac{1-\sqrt{5}}{2}\right] \cup \left[ \zeta_{\min}^+, \zeta_{\max}^+ \right],
	$$
	where $\zeta_{\min}^+$ and $\zeta_{\max}^+$ are respectively the smallest and largest positive roots of the cubic polynomial $\lambda^3 - \lambda^2 - 2\lambda + 1$ (approximately 0.4450 and 1.8019, respectively). 
\end{theorem}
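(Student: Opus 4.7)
The plan is to derive the four eigenvalue bounds via energy estimates on the similarity-transformed symmetric operator $\tilde{\sK} := \sM^{-1/2}\sK\sM^{-1/2}$, which shares the spectrum of $\sM^{-1}\sK$. Writing $\tilde{B} = S_1^{-1/2}BA^{-1/2}$, $\tilde{C} = S_2^{-1/2}CS_1^{-1/2}$, $\tilde{D} = S_1^{-1/2}DS_1^{-1/2}$, and $\tilde{E} = S_2^{-1/2}ES_2^{-1/2}$, the key structural identities $\tilde{B}\tilde{B}^T + \tilde{D} = I$ and $\tilde{C}\tilde{C}^T + \tilde{E} = I$, already exploited in the proof of Theorem~\ref{thm:inertia}, tightly couple each off-diagonal block to its adjacent diagonal block. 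For an eigenpair $(\lambda,v)$ with $v = (x,y,z)$, introduce the scalars $a = \|x\|^2$, $b = \|y\|^2$, $c = \|z\|^2$ and parameters $\alpha = y^T\tilde{D}y/b \in [0,1]$, $\beta = z^T\tilde{E}z/c \in [0,1]$, and extract from the three block rows two exact identities, $(\lambda-1)^2 a = (1-\alpha) b$ and $(\lambda-1)a - (\lambda+\alpha)b + (\lambda-\beta)c = 0$, together with the Cauchy--Schwarz inequality $(\lambda-\beta)^2 c \leq (1-\beta) b$ coming from $z^T\tilde{C}y = (\lambda-\beta)c$.

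Eliminating $a$ and $c$ via the first two identities and substituting into the inequality yields, for $\lambda \neq 1$, the cubic constraint
\[
g(\lambda;\alpha,\beta) := \lambda^3 + (\alpha-\beta-1)\lambda^2 + [\beta(2-\alpha)-2]\lambda + 1,
\]
with $g(\lambda;\alpha,\beta) \leq 0$ when $\lambda > 1$ and $g(\lambda;\alpha,\beta) \geq 0$ when $\lambda < 1$. At the corners of $[0,1]^2$ the cubic factors cleanly as $\lambda^3-\lambda^2-2\lambda+1$, $(\lambda-1)(\lambda^2+\lambda-1)$, $(\lambda-1)(\lambda^2-\lambda-1)$, and $(\lambda-1)^2(\lambda+1)$, whose roots recover the four numerical endpoints of the theorem. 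Using $\partial_\alpha g = \lambda(\lambda-\beta)$ and $\partial_\beta g = \lambda(2-\alpha-\lambda)$ to locate the extrema of $g$ on the parameter square, the bounds $\lambda \leq \zeta_{\max}^+$ (minimizer at $(0,0)$ for $\lambda\in(1,2)$) and $\lambda \geq -(1+\sqrt{5})/2$ (maximizer at $(1,0)$ for $\lambda<0$) drop out immediately. The upper negative endpoint $(1-\sqrt{5})/2$ then emerges from the sign condition $c\geq 0$, which demands $h(\lambda;\alpha) := \lambda^2 + (\alpha-1)\lambda - 1$ share the sign of $(\lambda-1)(\lambda-\beta)$: no $\alpha \in [0,1]$ achieves $h(\lambda;\alpha)\geq 0$ when $\lambda \in ((1-\sqrt{5})/2, 0)$, leaving only the degenerate case $z=0$, which reduces to a block-$2\times 2$ eigenproblem whose eigenvalues satisfy $h(\lambda;\alpha)=0$ and lie in $[-1,(1-\sqrt{5})/2] \cup [1,(1+\sqrt{5})/2]$.

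The main obstacle is the positive gap endpoint $\zeta_{\min}^+$. Because the Cauchy--Schwarz inequality in the third relation is generically strict in higher dimensions, the necessary condition $g(\lambda;\alpha,\beta) \geq 0$ by itself does not forbid positive eigenvalues below $\zeta_{\min}^+$: at $(\alpha,\beta)=(0,0)$ one has $g(\lambda;0,0) > 0$ throughout $(0,\zeta_{\min}^+)$. Closing this gap requires either showing that $(\alpha,\beta)=(0,0)$ is incompatible with a nontrivial eigenvector (since $\alpha=0$ forces $y \in \ker(\tilde{D})$ and $\beta=0$ forces $z \in \ker(\tilde{E})$, which combined with $z = (1/\lambda) S_2^{-1} C y$ from the third block row imposes an additional rank condition that typically fails), or strengthening the Cauchy--Schwarz step by exploiting the explicit form $z = (\lambda S_2 - E)^{-1} Cy$ together with spectral bounds on $S_2^{-1}E$. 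Tightness at $\zeta_{\min}^+$ follows from recovering it as the smallest positive root of $g(\cdot;0,0)$ in the $D=E=0$ case treated in Section~\ref{sec:prec_eig_unreg}.
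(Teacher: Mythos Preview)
Your parametrization by $\alpha=y^T\tilde Dy/\|y\|^2$, $\beta=z^T\tilde Ez/\|z\|^2$ and reduction to the cubic $g(\lambda;\alpha,\beta)$ is a genuinely different route from the paper's, and it cleanly yields three of the four endpoints: extremizing $g$ over $[0,1]^2$ gives $\zeta_{\max}^+$ and $-(1+\sqrt5)/2$, and the sign constraint $c\ge 0$ forces $h(\lambda;\alpha)\ge 0$ for negative $\lambda$, which is impossible on $((1-\sqrt5)/2,0)$. The paper argues each of those bounds separately, substituting into a chosen block row and invoking a different operator inequality each time (for instance $(\lambda S_2-E)^{-1}\preceq(\lambda CS_1^{-1}C^T)^{-1}$ when $\lambda>1$, or simply dropping $-y^TC^T(\lambda S_2-E)^{-1}Cy\ge 0$ when $\lambda<0$); your scalar framework unifies those three cases nicely.

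However, the lower positive endpoint $\zeta_{\min}^+$ is a genuine gap, and the fixes you sketch do not close it. The Cauchy--Schwarz step $(\lambda-\beta)^2c\le(1-\beta)b$ discards the alignment of $\tilde C^Tz$ with $y$, and the obstruction is not confined to the corner $(\alpha,\beta)=(0,0)$: for every $\lambda\in(0,\zeta_{\min}^+)$ the choice $\alpha=\beta=0$, $b=1$, $a=(\lambda-1)^{-2}$, $c=(\lambda^2-\lambda-1)/[\lambda(\lambda-1)]>0$ satisfies both exact identities and the Cauchy--Schwarz inequality, so no argument using only those scalar relations can exclude such $\lambda$. The paper handles this bound by reversing the elimination order: for $0<\lambda<1$ it solves the first two block rows for $y=\bigl(\tfrac{1}{1-\lambda}BA^{-1}B^T+D+\lambda S_1\bigr)^{-1}C^Tz$, substitutes into the third row, and then uses the operator inequality $D\preceq\tfrac{1}{1-\lambda}D$ to collapse the bracket to $\bigl(\tfrac{1}{1-\lambda}+\lambda\bigr)S_1$. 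After dropping $(1-\lambda)z^TEz\ge 0$ and dividing by $z^TCS_1^{-1}C^Tz$, one obtains $\lambda^3-\lambda^2-2\lambda+1\le 0$ directly. The essential point is applying the identity $BA^{-1}B^T+D=S_1$ at the matrix level \emph{before} reducing to scalars---precisely the structural information your Cauchy--Schwarz step discards.
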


\begin{proof}
	{\fbox{\em Upper bound on positive eigenvalues.}}
	We know from Theorem \ref{thm:inertia} that the upper bound on positive eigenvalues is greater than 1, so we assume here that $\lambda > 1$. We begin by writing the eigenvalue equations as
	\begin{subequations}
		\begin{align}
			\label{eqn:eigeq1}
			Ax + B^T y &= \lambda A x; \\
			\label{eqn:eigeq2}
			Bx - Dy + C^T z &= \lambda S_1 y; \\
			\label{eqn:eigeq3}
			Cy + Ez &= \lambda S_2 z.
		\end{align}
	\end{subequations}
	From \eqref{eqn:eigeq1} we get 
	\begin{equation}
		\label{eqn:solve_for_x}
		x = \frac{1}{\lambda - 1}A^{-1}B^T y,
	\end{equation}
	and from \eqref{eqn:eigeq3} we get $z = (\lambda S_2 - E)^{-1} Cy$ (because $\lambda > 1$, we are guaranteed that $\lambda S_2 - E = (\lambda -1)E +\lambda CS_1^{-1}C^T $ is positive definite). Substituting these values back into \eqref{eqn:eigeq2} and pre-multiplying by $y^T$ gives
	\begin{equation*}
		\left(\frac{1}{\lambda-1}\right)y^T BA^{-1}B^T y + y^T (D+\lambda S_1) y- y^T C^T (\lambda S_2 - E)^{-1}Cy = 0.
	\end{equation*}
	Recalling that $S_1 = D + BA^{-1}B^T$ we can rewrite this as
	\begin{equation}
		\label{eqn:energy_y}
		\left( \frac{1}{1-\lambda} + \lambda \right) y^T S_1 y + \left( 1 - \frac{1}{1-\lambda} \right) y^T D y - y^T C^T (\lambda S_2 - E)^{-1} C y = 0.
	\end{equation}
	Because $\lambda > 1$, we have
	\begin{align*}
		y^T C^T (\lambda S_2 - E)^{-1} C y &\le y^T C^T \Big(\lambda (S_2 - E)\Big)^{-1} C y\\
		&= \frac{1}{\lambda}y^T C^T \left(CS_1^{-1}C^T\right)^{-1}C y.
	\end{align*}
	Therefore, \eqref{eqn:energy_y} gives
	\begin{equation}
		\label{eqn:pre-yt}
		\left( \frac{1}{1-\lambda} + \lambda \right) y^T S_1 y + \left( 1 - \frac{1}{1-\lambda} \right) y^T D y- \frac{1}{\lambda} y^T C^T (CS_1^{-1} C^T)^{-1}Cy \le 0.
	\end{equation}
	Next, we let $\tilde{y} = S_1^{1/2}y$ and rewrite the first and third terms in \eqref{eqn:pre-yt} in terms of $\tilde{y}$:
	\begin{equation}
		\label{eqn:energy_yt}
		\left( \frac{1}{1-\lambda} + \lambda \right) \tilde{y}^T \tilde{y} + \left( 1 - \frac{1}{1-\lambda} \right) y^T D y- \frac{1}{\lambda} \tilde{y}^T \underbrace{S_1^{-1/2} C^T (CS_1^{-1} C^T)^{-1}C S_1^{-1/2}}_{:= P}\tilde{y} \le 0.
	\end{equation}
	Because $P$ defined in \eqref{eqn:energy_yt} is an orthogonal projector, we have $\tilde{y}^T P \tilde{y} \le \tilde{y}^T \tilde{y}$. And because $\lambda > 1$, we have $1 - \frac{1}{1-\lambda} >0$, which means that the second term $\left( 1 - \frac{1}{1-\lambda} \right) y^T D y$ is non-negative and can be dropped from the inequality. The inequality \eqref{eqn:energy_yt} therefore becomes
	$$
	\left( \frac{1}{1-\lambda} + \lambda \right) \tilde{y}^T \tilde{y} - \frac{1}{\lambda}\tilde{y}^T{y} \le 0,
	$$
	which we can divide by $\tilde{y}^T\tilde{y}$ rearrange to give
	$$
	\lambda^3 - \lambda^2 - 2\lambda +1 \le 0.
	$$
	This, combined with the assumption that $\lambda > 1$, gives us the stated result that $\lambda$ is less than or equal to the largest root of $\lambda^3 - \lambda^2 - 2\lambda +1$. The polynomial $\lambda^3 - \lambda^2 - 2\lambda +1$ is of the form given in Corollary \ref{cor:cubic-poly} with $a = b = c =1$, $d = e = 0$, so its value is negative between the two positive roots.
	
	{\fbox {\em Lower bound on negative eigenvalues.}}
	We begin from \eqref{eqn:energy_y} and note that when $\lambda < 0$, by similar reasoning as was shown for the upper bound on positive eigenvalues,
	$$
	\left( \frac{1}{1-\lambda} + \lambda \right) y^T S_1 y + \left( 1 - \frac{1}{1-\lambda} \right) y^T D y- \frac{1}{\lambda} y^T C^T (CS_1^{-1} C^T)^{-1}Cy \ge 0.
	$$
	Rewriting the inequality in terms of $\tilde{y} = S_1^{1/2}y$ gives
	$$
	\left( \frac{1}{1-\lambda} + \lambda \right) \tilde{y}^T \tilde{y} + \left( 1 - \frac{1}{1-\lambda} \right) \tilde{y}^T S_1^{-1/2} D S_1^{-1/2} \tilde{y}- \frac{1}{\lambda} \tilde{y}^T P \tilde{y} \ge 0,
	$$
	where $P$ is the orthogonal projector defined in \eqref{eqn:energy_yt}. For the second term, note that $S_1^{-1/2} D S_1^{-1/2}$ is similar to $S_1^{-1}D$ which has all eigenvalues between 0 and 1 (by Lemma \ref{lem:inv_times_sum}). Thus, both $\tilde{y}^T S_1^{-1/2} D S_1^{-1/2} \tilde{y}$ and $\tilde{y}^T P \tilde{y}$ are less than or equal to $\tilde{y}^T \tilde{y}$ and we can therefore write
	$$
	\left( \frac{1}{1-\lambda} + \lambda \right) \tilde{y}^T \tilde{y} + \left( 1 - \frac{1}{1-\lambda} \right)\tilde{y}^T\tilde{y}- \frac{1}{\lambda}\tilde{y}^T \tilde{y} \ge 0,
	$$
	which, after dividing by $\tilde{y}^T \tilde{y}$ and simplifying, yields
	$$
	\lambda^2 + \lambda - 1 \le 0.
	$$
	This along with the assumption that $\lambda < 0$ gives the desired bound of $\lambda \ge -\frac{1 + \sqrt{5}}{2}$.
	
	{\fbox {\em Lower bound on positive eigenvalues.}}
	Assume that $0 < \lambda < 1$ (we know from Theorem \ref{thm:inertia} that the lower bound is in this interval). Substituting \eqref{eqn:solve_for_x} into \eqref{eqn:eigeq2} and solving for $y$ gives
	\begin{equation}
		\label{eqn:moreQ}
		y=\underbrace{\left( \frac{1}{1-\lambda} BA^{-1}B^T + D + \lambda S_1 \right)^{-1}}_{=: Q} C^T z,
	\end{equation}
	When $0 < \lambda < 1$, the value $\frac{1}{1-\lambda}$ is positive, so we are guaranteed that $Q$ in \eqref{eqn:moreQ} is positive definite. If $z \in \ker(C^T)$, \eqref{eqn:moreQ} gives $y=0$ and \eqref{eqn:solve_for_x} gives $x=0$, and we can see from the eigenvalue equations \eqref{eqn:eigeq1}-\eqref{eqn:eigeq3} that this eigenvector corresponds to $\lambda = 1$ (because $Ez = S_2z$ for $z \in \ker(C^T)$). This contradicts our assumption that $\lambda < 1$; thus, we assume $z \notin \ker(C^T)$.  
	
	We can then write \eqref{eqn:eigeq3} as
	\begin{equation}
		\label{eqn:withD}
		C\left( \frac{1}{1-\lambda}BA^{-1}B^T + D + \lambda S_1 \right)^{-1}C^T z + (1 - \lambda)E z - \lambda CS_1^{-1}C^T z = 0.
	\end{equation}
	When $0 < \lambda < 1$, we have $\frac{1}{1-\lambda} > 1$. Therefore, if we take the inner product of $z^T$ with \eqref{eqn:withD}, replace $D$ by $\frac{1}{1-\lambda}D$, and drop the non-negative term $(1 - \lambda)z^T E z$, we obtain the inequality
	$$
	z^T C \left( \left( \frac{1}{1-\lambda} + \lambda \right) S_1 \right)^{-1}C^T z - \lambda z^T CS_1^{-1}C^T z \le 0.
	$$
	After simplifying and dividing by $z^T CS_1^{-1}C^T z$, we obtain
	\begin{equation}
		\label{eqn:cubic}
		\lambda^3 - \lambda^2 -2\lambda + 1 \le 0.
	\end{equation}
	This, combined with the assumption that $0 < \lambda < 1$, gives us that $\lambda$ must be greater than or equal to the smaller positive root of \eqref{eqn:cubic}, as required.

	{\fbox {\em Upper bound on negative eigenvalues.}}
	Assume that $\lambda < 0$. We begin from Equation \eqref{eqn:energy_y} and note that $\lambda S_2 -E$ is negative definite. Therefore,
	$$
	\left( \frac{1}{1-\lambda} + \lambda \right)y^T S_1 y + \left( 1 - \frac{1}{1-\lambda} \right) y^T D y \le 0.
	$$
	Since $1 - \frac{1}{1 - \lambda} > 0$, the $y^T D y$ term is non-negative and can be dropped while keeping the inequality, giving us
	$$
	\left( \frac{1}{1-\lambda} + \lambda \right) y^T S_1 y \le 0.
	$$
	We thus require
	$$
	\frac{1}{1-\lambda} + \lambda \le 0 \ \ \textrm{with } \ \lambda < 0,
	$$
	which leads to the desired bound $\lambda \le \frac{1 - \sqrt{5}}{2}$.
\end{proof}

\section{Bounds for preconditioners with approximations of Schur complements}
\label{sec:prec_inexact}
In practice, it is too expensive to invert $A$, $S_1$, and $S_2$ exactly. In this section we examine eigenvalue bounds on matrices of the form $\tilde{\sM}^{-1}\sK$, where $\tilde{\sM}$ uses symmetric positive definite and ideally spectrally equivalent approximations for $A$, $S_1$, and $S_2$. Specifically, we consider the approximate block diagonal preconditioner
\begin{equation}
	\label{eqn:pdtilde}
	\tilde{\sM} = \begin{bmatrix}
		\tilde{A} & 0 & 0 \\
		0 & \tilde{S}_1 & 0 \\
		0 & 0 & \tilde{S}_2
	\end{bmatrix},
\end{equation}
with $\tilde{A}$, $\tilde{S}_1$ and $\tilde{S}_2$ satisfying the following:

\begin{assumption}
	\label{thm:spec_equiv}
	Let $\Lambda(\cdot)$ denote the spectrum of a matrix. The diagonal blocks of the approximate preconditioner $\tilde{\sM}$, given by $\tilde{A}$, $\tilde{S}_1$, and $\tilde{S}_2$, satisfy:
	\begin{enumerate}[label=\roman*.]
		\item $\Lambda(\tilde{A}^{-1}A) \in [\alpha_0, \beta_0]$;
		\item $\Lambda(\tilde{S}_1^{-1}S_1) \in [\alpha_1, \beta_1]$;
		\item $\Lambda(\tilde{S}_2^{-1}S_2) \in [\alpha_2, \beta_2]$,
	\end{enumerate}
	where $0 < \alpha_i \le 1 \le \beta_i$.
\end{assumption}
We note that to obtain spectral equivalence we seek approximations that yield values of $\alpha_i$ independent of the mesh size and bounded uniformly away from zero. It is also worth noting that the values of $\alpha_i$ and $\beta_i$ are typically not explicitly available. We briefly address this at the end of this section, following our derivation of the bounds.

To simplify our analyses, we define:
\begin{subequations}
	\begin{align}
		\label{eqn:q0}
		\tilde{A}^{-1}A =: Q_0; \\
		\label{eqn:q1}
		\tilde{S}_1^{-1}S_1 =: Q_1; \\
		\label{eqn:q2}
		\tilde{S}_2^{-1}S_2 =: Q_2.
	\end{align}
\end{subequations}

To derive the eigenvalue bounds, we consider the split preconditioned matrix
\begin{equation}
	\label{eqn:ptildek}
	\tilde{\sM}^{-1/2}\sK \tilde{\sM}^{-1/2} = \begin{bmatrix}
		\tilde{Q}_0 & \tilde{B}^T & 0 \\
		\tilde{B} & -\tilde{D} & \tilde{C}^T \\
		0 & \tilde{C} & \tilde{E}
	\end{bmatrix},
\end{equation}
where $\tilde{Q}_0 = \tilde{A}^{-1/2}A\tilde{A}^{-1/2}$, $\tilde{B} = \tilde{S}_1^{-1/2}B\tilde{A}^{-1/2}$, $\tilde{C} = \tilde{S}_2^{-1/2}C\tilde{S}_1^{-1/2}$, $\tilde{D} = \tilde{S}_1^{-1/2}D\tilde{S}_1^{-1/2}$, and $\tilde{E} = \tilde{S}_2^{-1/2}E\tilde{S}_2^{-1/2}$. We proceed by bounding the eigenvalues and singular values of the blocks of $\tilde{\sM}^{-1/2}\sK \tilde{\sM}^{-1/2}$ and then applying the results for general double saddle-point matrices presented in Section \ref{sec:derivationK}. In order to avoid providing internal eigenvalue bounds equal to zero (see Remark 1) we assume here that $B$ and $C$ have full row rank.

\begin{lemma}
	\label{lem:blockBounds}
	When $B$ and $C$ have full row rank, bounds on the eigenvalues/singular values of the blocks of the matrix $\tilde{\sM}^{-1/2}\sK\tilde{\sM}^{-1/2}$ are as follows:
	\begin{itemize}
		\item $\tilde{Q}_0$: eigenvalues are in $[\alpha_0, \beta_0]$.
		\item $\tilde{B}$: singular values are in $\left[ \sqrt{\frac{\alpha_0 \alpha_1}{1 + \eta_D}}, \sqrt{\beta_0 \beta_1} \right]$, where $\eta_D$ is the maximal eigenvalue of $(BA^{-1}B^T)^{-1}D$.
		\item $\tilde{C}$: singular values are in $\left[ \sqrt{\frac{\alpha_1 \alpha_2}{1 + \eta_E}}, \sqrt{\beta_1 \beta_2} \right]$, where $\eta_E$ is the maximal eigenvalue of $(CS_1^{-1}C^T)^{-1}E$.
		\item $\tilde{D}$: eigenvalues are in $[0, \beta_1]$.
		\item $\tilde{E}$: eigenvalues are in $[0, \beta_2]$.
	\end{itemize}
\end{lemma}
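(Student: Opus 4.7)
The plan is to bound each block of $\tilde{\sM}^{-1/2}\sK\tilde{\sM}^{-1/2}$ in turn, using Assumption~\ref{thm:spec_equiv} together with a handful of standard Loewner-order manipulations. First I would observe that $\tilde{A}^{-1/2}A\tilde{A}^{-1/2}$ is similar to $\tilde{A}^{-1}A$ (via conjugation by $\tilde{A}^{1/2}$), so the two matrices share the same eigenvalues; and that the hypothesis $\Lambda(\tilde{A}^{-1}A)\subseteq[\alpha_0,\beta_0]$ is therefore equivalent to the Loewner ordering $\alpha_0\tilde{A}\preceq A\preceq\beta_0\tilde{A}$, which inverts to $\alpha_0 A^{-1}\preceq\tilde{A}^{-1}\preceq\beta_0 A^{-1}$. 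Analogous equivalences hold for $\tilde{S}_1$ and $\tilde{S}_2$. The bound for $\tilde{Q}_0$ is then immediate from the similarity, and the bounds for $\tilde{D}$ and $\tilde{E}$ follow from $D\preceq S_1$ (since $S_1=D+BA^{-1}B^T$ with $BA^{-1}B^T\succeq 0$) and $E\preceq S_2$: conjugating by $\tilde{S}_1^{-1/2}$ gives $0\preceq\tilde{D}\preceq\tilde{S}_1^{-1/2}S_1\tilde{S}_1^{-1/2}\preceq\beta_1 I$, with the $\tilde{E}$ case handled identically.

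The real work lies in the singular-value bounds for $\tilde{B}$ and $\tilde{C}$, which I would attack by bounding the Gramians $\tilde{B}\tilde{B}^T=\tilde{S}_1^{-1/2}B\tilde{A}^{-1}B^T\tilde{S}_1^{-1/2}$ and $\tilde{C}\tilde{C}^T=\tilde{S}_2^{-1/2}C\tilde{S}_1^{-1}C^T\tilde{S}_2^{-1/2}$ in the Loewner order. For the upper bound on $\tilde{B}$, chaining $\tilde{A}^{-1}\preceq\beta_0 A^{-1}$, $BA^{-1}B^T\preceq S_1$, and $\tilde{S}_1^{-1/2}S_1\tilde{S}_1^{-1/2}\preceq\beta_1 I$ yields $\tilde{B}\tilde{B}^T\preceq\beta_0\beta_1 I$, hence $\sigma_{\max}(\tilde{B})\le\sqrt{\beta_0\beta_1}$; the same pattern gives $\sigma_{\max}(\tilde{C})\le\sqrt{\beta_1\beta_2}$.

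The main obstacle is the lower bound, because $BA^{-1}B^T$ cannot be bounded below by $S_1$ without accounting for the contribution from $D$. The key step is to exploit the definition of $\eta_D$: from $D\preceq\eta_D BA^{-1}B^T$ one obtains $S_1=D+BA^{-1}B^T\preceq(1+\eta_D)BA^{-1}B^T$, hence $BA^{-1}B^T\succeq\frac{1}{1+\eta_D}S_1$. Coupling this with $\tilde{A}^{-1}\succeq\alpha_0 A^{-1}$ and $\tilde{S}_1^{-1/2}S_1\tilde{S}_1^{-1/2}\succeq\alpha_1 I$ delivers $\tilde{B}\tilde{B}^T\succeq\frac{\alpha_0\alpha_1}{1+\eta_D}I$, and taking square roots recovers the stated lower bound on $\sigma_{\min}(\tilde{B})$. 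The lower bound on $\sigma_{\min}(\tilde{C})$ follows from the identical device with $\eta_E$ in place of $\eta_D$ and $S_2,CS_1^{-1}C^T$ in place of $S_1,BA^{-1}B^T$. The full-row-rank hypothesis on $B$ and $C$ enters only to ensure that $BA^{-1}B^T$ and $CS_1^{-1}C^T$ are invertible, so that $\eta_D$ and $\eta_E$ are well-defined finite quantities.
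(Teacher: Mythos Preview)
Your proposal is correct and arrives at the same bounds as the paper, but the technical framing differs slightly. The paper argues via chains of similarity transformations and eigenvalue products (e.g., $\tilde{B}\tilde{B}^T$ is similar to $Q_1 S_1^{-1}B\tilde{A}^{-1}B^T$, then the nonzero eigenvalues of $S_1^{-1}B\tilde{A}^{-1}B^T$ equal those of $Q_0 A^{-1}B^TS_1^{-1}B$, and Lemma~\ref{lem:inv_times_sum} bounds the eigenvalues of $S_1^{-1}BA^{-1}B^T$); for the lower bound it writes $S_1^{-1}BA^{-1}B^T = (I + (BA^{-1}B^T)^{-1}D)^{-1}$ explicitly. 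You instead translate Assumption~\ref{thm:spec_equiv} into Loewner inequalities once and then chain congruences, deriving $BA^{-1}B^T \succeq \frac{1}{1+\eta_D}S_1$ directly from $D \preceq \eta_D BA^{-1}B^T$. The two routes are equivalent in substance; your Loewner presentation is somewhat cleaner and sidesteps the need to justify separately that products like $Q_1(S_1^{-1}D)$ inherit the product of eigenvalue bounds, while the paper's similarity chain makes more visible exactly which generalized Rayleigh quotients are being bounded at each step.
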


\begin{proof}
	The eigenvalue bounds on $\tilde{Q}_0$ follow from the fact that $\tilde{Q}_0$ is similar to $Q_0$.  For $\tilde{D}$ and $\tilde{E}$, the lower bounds follow from the fact that $D$ and $E$ are semidefinite (if $D$ and/or $E$ are definite, this bound will be loose, but we use the zero bound to simplify some results that we present in this section). For the upper bound on $\tilde{D}$, we note that $\tilde{D} = \tilde{S}_1^{-1/2}D \tilde{S}_1^{-1/2}$, which is similar to $\tilde{S}_1^{-1}D = Q_1S_1^{-1}D$. The eigenvalues of $S_1^{-1}D$ are less than or equal to 1 by Lemma \ref{lem:inv_times_sum}, 
	meaning that those of $Q_1 S_1^{-1}D$ are less than or equal to $\beta_1$. Analogous reasoning gives the upper bound for $\tilde{E}$.
	
	We now present the results for $\tilde{B}$. For the upper bound, note that the matrix $\tilde{B}\tilde{B}^T = \tilde{S}_1^{-1/2} B \tilde{A}^{-1} B^T \tilde{S}_1^{-1/2}$  is similar to $\tilde{S}_1^{-1}B \tilde{A}^{-1}B^T = Q_1 S_1^{-1}B\tilde{A}^{-1}B^T$. Because the eigenvalues of $Q_1$ are in $[\alpha_1, \beta_1]$, we need only bound the eigenvalues of $S_1^{-1}B\tilde{A}^{-1}B^T$. These are the same as the nonzero eigenvalues of
	$$
	\tilde{A}^{-1}B^T S_1^{-1}B = Q_0 A^{-1}B^T S_1^{-1}B.
	$$
	The nonzero eigenvalues of $A^{-1}B^T S_1^{-1}B$ are the same as those of $S_1^{-1}BA^{-1}B^T$, which are all less than or equal to 1 by Lemma \ref{lem:inv_times_sum}. 
	Thus, the eigenvalues of $Q_0 A^{-1}B^T S_1^{-1}B$ are less than or equal to $\beta_0$, from which we conclude that the eigenvalues of $\tilde{B}\tilde{B}^T$ are less than or equal to $\beta_0 \beta_1$, giving an upper singular value bound of $\sqrt{\beta_0 \beta_1}$. Similarly, a lower bound on the eigenvalues of $\tilde{B}\tilde{B}^T$ is given by $\alpha_0 \alpha_1$ times a lower bound on the eigenvalues of $S_1^{-1}BA^{-1}B^T$. Because we have assumed that $B$ is full rank, $BA^{-1}B^T$ is invertible, implying that
	$$
	S_1^{-1}BA^{-1}B^T = \left( (BA^{-1}B^T)^{-1}S_1 \right)^{-1} = \left( I+ (BA^{-1}B^T)^{-1}D \right)^{-1}.
	$$
	The stated result then follows because the eigenvalues of $S_1^{-1}BA^{-1}B^T$ are greater than or equal to $\frac{1}{1 + \eta_D}$, where $\eta_D$ is the maximal eigenvalue of $(BA^{-1}B^T)^{-1}D$. Thus, a lower bound on the singular values of $\tilde{B}$ is given by the square root of this value.
	
	The bounds for $\tilde{C}$ are obtained in the same way as those for $\tilde{B}$.
\end{proof}

We can now present bounds on the eigenvalues of $\tilde{\sM}^{-1/2}\sK \tilde{\sM}^{-1/2}$. We define three cubic polynomials:
\begin{subequations}
	\begin{align}
		u(\lambda) &=  \lambda^3 + (\beta_1 - \alpha_0) \lambda^2 - \left( \alpha_0 \beta_1  + \frac{\alpha_1 \alpha_2}{1+\eta_E} + \beta_0 \beta_1 \right)\lambda + \frac{\alpha_0 \alpha_1 \alpha_2}{1+\eta_E};
		\\
		v(\lambda) &= \lambda^3 - (\beta_0 + \beta_2) \lambda^2 + ( \beta_0 \beta_2- \beta_0\beta_1- \beta_1 \beta_2) \lambda + 2\beta_0 \beta_1 \beta_2;
		\\
		w(\lambda) &= \lambda^3 + (\beta_1 - \alpha_0)\lambda^2 - ( \alpha_0 \beta_1+\beta_0 \beta_1 + \beta_1 \beta_2) \lambda + \alpha_0 \beta_1 \beta_2.
	\end{align}
\end{subequations}
These polynomials all have two positive roots and one negative root, by Corollary \ref{cor:cubic-poly}. As in Section Section \ref{sec:unprec}, we let $u^-$ denote the (single) negative root of a polynomial $u$, and let $u_{\min}^+$ and $u_{\max}^+$ respectively denote the smallest and largest positive roots.
\begin{theorem}[Eigenvalue bounds, matrix $\tilde{\sM}^{-1/2}\sK \tilde{\sM}^{-1/2}$]
	\label{thm:inexact_precon_bounds}
	When $B$ and $C$ have full row rank, the eigenvalues of $\tilde{\sM}^{-1/2}\sK \tilde{\sM}^{-1/2}$ are bounded within the intervals
	\begin{equation} 
		\left[w^-, \frac{\beta_0 - \sqrt{\beta_0^2 + \frac{4\alpha_0 \alpha_1}{1 + \eta_D}}}{2} \right] \ \bigcup \ \left[ u^+_{\min}, v^+_{\max} \right]. 
		\label{eq:bound_approx}
	\end{equation}
\end{theorem}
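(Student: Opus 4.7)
The plan is to observe that the symmetrically preconditioned matrix $\tilde{\sM}^{-1/2}\sK\tilde{\sM}^{-1/2}$ displayed in \eqref{eqn:ptildek} is itself a symmetric double saddle-point matrix of the same generic form as $\sK$ in \eqref{eqn:2spdef}, with $(\tilde{Q}_0, \tilde{B}, \tilde{C}, \tilde{D}, \tilde{E})$ playing the roles of $(A, B, C, D, E)$. A short verification shows the structural hypotheses transfer: $\tilde{Q}_0 = \tilde{A}^{-1/2} A \tilde{A}^{-1/2}$ is symmetric positive definite, $\tilde{D}$ and $\tilde{E}$ are symmetric positive semidefinite as congruences of $D$ and $E$, and a direct calculation gives that the first Schur complement of \eqref{eqn:ptildek} equals $\tilde{S}_1^{-1/2} S_1 \tilde{S}_1^{-1/2}$ with an analogous expression for the second, both positive definite. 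Since $B$ and $C$ are assumed to have full row rank, so do $\tilde{B}$ and $\tilde{C}$, sidestepping the degenerate case of Remark~1. Consequently Theorem \ref{thm:bounds_unprec} applies directly to \eqref{eqn:ptildek}.

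Next I would substitute the block bounds from Lemma \ref{lem:blockBounds} into the polynomials $p, q, r$ defined in \eqref{eq:pqr} and into the middle-bound expression $\tfrac{1}{2}\bigl(\mu_{\max}^A - \sqrt{(\mu_{\max}^A)^2 + 4(\sigma_{\min}^B)^2}\bigr)$. Using $\mu_{\min}^A \mapsto \alpha_0$, $\mu_{\max}^A \mapsto \beta_0$, $\mu_{\min}^D \mapsto 0$, $\mu_{\max}^D \mapsto \beta_1$, $\mu_{\min}^E \mapsto 0$, $\mu_{\max}^E \mapsto \beta_2$, $(\sigma_{\min}^B)^2 \mapsto \alpha_0\alpha_1/(1+\eta_D)$, $(\sigma_{\max}^B)^2 \mapsto \beta_0\beta_1$, $(\sigma_{\min}^C)^2 \mapsto \alpha_1\alpha_2/(1+\eta_E)$, and $(\sigma_{\max}^C)^2 \mapsto \beta_1\beta_2$, a routine expansion shows that $p$ becomes $u$, $q$ becomes $v$, $r$ becomes $w$, and the middle-bound expression becomes $\tfrac{1}{2}\bigl(\beta_0 - \sqrt{\beta_0^2 + 4\alpha_0\alpha_1/(1+\eta_D)}\bigr)$, which is precisely the enclosure \eqref{eq:bound_approx}.

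A small subtlety worth flagging is that Lemma \ref{lem:blockBounds} supplies outer bounds on the extremal eigenvalues and singular values of the transformed blocks, not their exact extrema. However, the proof of Theorem \ref{thm:bounds_unprec} is monotone in those extrema: each Cauchy--Schwarz estimate, each entry of the auxiliary $3 \times 3$ matrix $R$, and each energy argument involving $(\lambda I - A)^{-1}$ remains a valid inequality when a maximum is replaced by an upper bound or a minimum by a lower bound. Hence the roots of the polynomials $u, v, w$ constructed from the Lemma~\ref{lem:blockBounds} bounds still enclose the spectrum of \eqref{eqn:ptildek}. The main obstacle is essentially bookkeeping --- verifying the sign of each substitution and tracking which $\alpha_i, \beta_i, \eta_D, \eta_E$ appear where --- since all of the substantive analytic work has already been done in Theorem \ref{thm:bounds_unprec} and Lemma \ref{lem:blockBounds}.
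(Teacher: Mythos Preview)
Your proposal is correct and follows essentially the same approach as the paper, which simply states that the bounds follow from Lemma~\ref{lem:blockBounds} and Theorem~\ref{thm:bounds_unprec}. Your version is more thorough in that you explicitly verify the substitutions $p\mapsto u$, $q\mapsto v$, $r\mapsto w$ and address the monotonicity subtlety (that outer bounds on the block extrema suffice), which the paper leaves implicit.
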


\begin{proof}
	The stated bounds follow from Lemma \ref{lem:blockBounds} and Theorem \ref{thm:bounds_unprec}.
\end{proof}

\paragraph{Remark 2.}Obtaining $\eta_D$ and $\eta_E$ requires computation of eigenvalues of two matrices related to the Schur complements $S_1$ and $S_2$: $(S_1-D)^{-1} D=(B A^{-1} B^T)^{-1} D$ and $(S_2-E)^{-1} E=(C S_1^{-1} C^T)^{-1} E$, respectively. As we have previously mentioned, in practice when solving \eqref{eq:sp_system} the matrices $S_1$ and $S_2$ would typically be approximated by sparse and easier-to-invert matrices, rather than formed and computed explicitly. Therefore, we cannot expect to compute $\eta_D$ and $\eta_E$ exactly. Spectral equivalence relations may be helpful in providing reasonable approximations here. For example, in common formulations of the Stokes-Darcy problem \cite{cai09} the matrix $A$ is a discrete Laplacian and $B$ and $C$ are discrete divergence operators (or scaled variations thereof). In such a case, for certain finite element discretizations $B A^{-1} B^T=S_1-D$ is spectrally equivalent to the mass matrix \cite[Section 5.5]{elman05}. Denote it by $Q$. Then, estimating the maximal eigenvalue of $\eta_D$ amounts to computing an approximation to the maximal eigenvalue of $Q^{-1} D$, which is computationally  straightforward given the favorable spectral properties of  $Q$. If the above-mentioned Schur complement  is approximated by the mass matrix, then it can be shown that $C S_1^{-1} C^T=S_2-E$ is strongly related to the scalar Laplacian, and therefore maximal eigenvalue of $\eta_E$ would be relatively easy to approximate as well. 
\vspace{0.4cm}

When $D = 0$, $\eta_D$ and the maximal eigenvalue of $\tilde{D}$ are zero. Similarly, when $E = 0$, $\eta_E$ and the maximal eigenvalue of $\tilde{E}$ are zero. In these cases, we can simplify some of the bounds of Theorem \ref{thm:inexact_precon_bounds}. Some of the cubic polynomials that define the bounds will change in these cases. We will use a bar (e.g., $\bar{u}$) to denote cubic polynomials where $D = 0$, a hat (e.g., $\hat{u}$) to denote the polynomials where $E = 0$, and both (e.g., $\hat{\bar{u}}$) to denote both $D$ and $E$ being zero. We define the following cubic polynomials:
\begin{subequations}
	\begin{align}
		\bar{u}(\lambda) &=  \lambda^3 - \alpha_0 \lambda^2 - \left( \frac{\alpha_1 \alpha_2}{1+\eta_E} + \beta_0 \beta_1 \right)\lambda + \frac{\alpha_0 \alpha_1 \alpha_2}{1+\eta_E};
		\\
		\hat{u}(\lambda) &= \lambda^3 + (\beta_1 - \alpha_0)\lambda^2 - (\alpha_0 \beta_1 + \alpha_1 \alpha_2 + \beta_0 \beta_1  )\lambda + \alpha_0\alpha_1\alpha_2;
		\\
		\hat{\bar{u}}(\lambda) &= \lambda^3 - \alpha_0\lambda^2 - (\alpha_1 \alpha_2 + \beta_0 \beta_1)\lambda + \alpha_0\alpha_1\alpha_2;
		\\
		\hat{v}(\lambda) &= \lambda^3 - \beta_0 \lambda^2 - (\beta_0 \beta_1 + \beta_1 \beta_2)\lambda + \beta_0 \beta_1 \beta_2;
		\\
		\bar{w}(\lambda) &= \lambda^3 - \alpha_0 \lambda^2 - (\beta_0 \beta_1 + \beta_1 \beta_2)\lambda + \alpha_0 \beta_1 \beta_2.
	\end{align}
\end{subequations}
The following results then follow directly from Theorem \ref{thm:inexact_precon_bounds}; the proof is omitted.
\begin{corollary}
	\label{cor:inexact_precon_bounds_de0}
	\textbf{In the case $D = 0, \ E \ne 0$:} the eigenvalues of $\sM^{-1/2}\sK \sM^{-1/2}$ are bounded within the intervals
	\begin{equation} 
		\left[\bar{w}^-, \frac{\beta_0 - \sqrt{\beta_0^2 + 4\alpha_0 \alpha_1}}{2} \right] \ \bigcup \ \left[\bar{u}^+_{\min}, v^+_{\max} \right]. 
		\label{eq:bound_approx_d0}
	\end{equation}
	\textbf{In the case $D \ne 0, \ E =0$:} the eigenvalues are bounded in
	\begin{equation} 
		\left[w^-, \frac{\beta_0 - \sqrt{\beta_0^2 + \frac{4\alpha_0 \alpha_1}{1 + \eta_D}}}{2} \right] \ \bigcup \ \left[\hat{u}^+_{\min}, \hat{v}^+_{\max} \right]. 
		\label{eq:bound_approx_e0}
	\end{equation}
	\textbf{In the case $D = 0, \ E =0$:} the eigenvalues are bounded in
	\begin{equation} 
		\left[\bar{w}^-, \frac{\beta_0 - \sqrt{\beta_0^2 + 4\alpha_0 \alpha_1}}{2} \right] \ \bigcup \ \left[\hat{\bar{u}}^+_{\min}, \hat{v}^+_{\max} \right]. 
		\label{eq:bound_approx_de0}
	\end{equation}
\end{corollary}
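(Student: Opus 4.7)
The plan is to obtain each of the three cases as specializations of Theorem~\ref{thm:inexact_precon_bounds}, by sharpening the block-wise bounds of Lemma~\ref{lem:blockBounds} in the presence of vanishing diagonal blocks and then substituting the sharper values into the cubic polynomials $p, q, r$ of Theorem~\ref{thm:bounds_unprec} (which underlie $u, v, w$).

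First I would observe that when $D = 0$, the conjugated block $\tilde{D} = \tilde{S}_1^{-1/2} D \tilde{S}_1^{-1/2}$ vanishes exactly, so the maximum eigenvalue of $\tilde{D}$ is $0$ rather than merely bounded by $\beta_1$. Likewise $\eta_D = \lambda_{\max}((BA^{-1}B^T)^{-1}D) = 0$, so the lower singular-value bound on $\tilde{B}$ in Lemma~\ref{lem:blockBounds} sharpens from $\sqrt{\alpha_0\alpha_1/(1+\eta_D)}$ to $\sqrt{\alpha_0\alpha_1}$. Symmetrically, when $E = 0$ we have $\mu_{\max}^{\tilde{E}} = 0$ and $\eta_E = 0$, so the lower singular-value bound on $\tilde{C}$ sharpens to $\sqrt{\alpha_1\alpha_2}$.

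Next I would propagate these sharper values through the coefficients of $p, q, r$. Setting $\mu_{\max}^{\tilde{D}} = 0$ and $\eta_D = 0$ converts $u \mapsto \bar u$ and $w \mapsto \bar w$, and simplifies the middle-interval radicand $\beta_0^2 + 4\alpha_0\alpha_1/(1+\eta_D)$ to $\beta_0^2 + 4\alpha_0\alpha_1$. Setting $\mu_{\max}^{\tilde{E}} = 0$ and $\eta_E = 0$ converts $u \mapsto \hat u$ and $v \mapsto \hat v$; notably $w$ is unchanged, because in the $r$ polynomial of Theorem~\ref{thm:bounds_unprec} every coefficient carrying $\mu_{\min}^E$ is already zeroed by the conventional PSD lower bound $\mu_{\min}^{\tilde{E}} = 0$ used in Theorem~\ref{thm:inexact_precon_bounds}. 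Combining both sharpenings yields $\hat{\bar u}, \hat v$, and $\bar w$ for the case $D = E = 0$. The three stated intervals then follow by substitution into \eqref{eq:bound_approx}.

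The expected obstacle is purely bookkeeping: tracking which coefficients of $p, q, r$ depend on which of $\mu_{\max}^{\tilde{D}}, \mu_{\min}^{\tilde{D}}, \mu_{\max}^{\tilde{E}}, \mu_{\min}^{\tilde{E}}, \eta_D$, and $\eta_E$, and confirming that the resulting reduced polynomials agree with $\bar u, \hat u, \hat{\bar u}, \hat v, \bar w$ as stated. No new spectral estimate is required, and in particular one must resist the temptation to introduce a $\hat w$ for the $E = 0$ cases: the $\mu_{\min}^{\tilde{E}}$ terms in $r$ were already inactive. The entire derivation therefore inherits its validity directly from Theorem~\ref{thm:inexact_precon_bounds}, which is why the authors can legitimately omit the proof.
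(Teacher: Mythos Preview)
Your proposal is correct and follows exactly the approach the paper intends: the paper states that the corollary ``follow[s] directly from Theorem~\ref{thm:inexact_precon_bounds}'' and omits the proof, and your argument carries out precisely this specialization by zeroing $\mu_{\max}^{\tilde D}$, $\eta_D$, $\mu_{\max}^{\tilde E}$, $\eta_E$ as appropriate and tracking the resulting simplifications of $u$, $v$, $w$. Your remark that $w$ is unaffected when $E=0$ (because $r$ already uses the semidefinite lower bound $\mu_{\min}^{\tilde E}=0$) is exactly the subtlety one needs to check, and it is correct.
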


\paragraph{Remark 3.}There is some looseness in the  bounds of Theorem \ref{thm:inexact_precon_bounds} when applied to the exact preconditioning case (i.e., $\alpha_i = \beta_i = 1$). This is a consequence of the fact that Theorem \ref{thm:inexact_precon_bounds} is based on the bounds for unpreconditioned matrices, which consider each matrix block individually, as opposed to the energy estimates approach in Section \ref{sec:prec}, which fully exploits the relationships between the blocks of the preconditioned matrix. For the extremal (lower negative and upper positive) bounds, the looseness is minor: Theorem \ref{thm:inexact_precon_bounds} gives a bound of 2 for the positive eigenvalues and approximately $-1.9$ on the negative eigenvalues, while we know from Theorem \ref{thm:bounds_prec} that tight bounds are approximately 1.8 and $-1.6$, respectively. For the interior bounds, however, we note that the bounds may be quite loose if $\eta_D$ or $\eta_E$ is very large. Fortunately, this is often not a concern in practical settings as having a large $\eta_D$ or $\eta_E$ generally implies that the spectral norm of $D$ or $E$ is large relative to that of $BA^{-1}B^T$ or $CS_1^{-1}C^T$, respectively. Often $D$ and/or $E$ are regularization terms, which tend to have a fairly small norm. Nonetheless, it should be acknowledged that the interior bounds may be pessimistic for some problems.

\paragraph{Remark 4.} The values of $\alpha_i$ and $\beta_i$ are rarely available in practical applications; they are typically coercivity constants or related quantities, which are proven to be independent of the mesh size but are not known explicitly. The values of $\beta_i$ should not typically generate a difficulty, and approximating them by $1$ or a value close to 1 should provide a reasonable approximation for the bounds. 
	
For the $\alpha_i$, let us provide some (partial) observations. For the solutions of the quadratic equations in Corollary~\ref{cor:inexact_precon_bounds_de0}, we can use the Taylor approximation $\sqrt{1+x} \lessapprox 1+\frac{x}{2}$
for $0< x \ll 1$ to conclude that 
$$  \frac{\beta_0 - \sqrt{\beta_0^2 + 4\alpha_0 \alpha_1}}{2} \gtrapprox - \frac{\alpha_0 \alpha_1}{\beta_0}. $$
This means that if $\alpha_0$ and $\alpha_1$ are small, then the above displayed expression would be a valid (albeit slightly less tight) upper negative bound, and if the $\alpha_i$ are uniformly bounded away from zero, then so is the bound. 

\section{Numerical experiments}
\label{sec:numex}
In this section we consider two slightly different variants of a Poisson control problem as in \cite{rees10b}. 
First, we consider a distributed control problem with Dirichlet boundary conditions:
\begin{subequations}
	\label{eqn:pdeco_dist_cont}
	\begin{align}
		\min_{u,f} \frac{1}{2}&||u-\hat{u}||_{L_2(\Omega)}^2 + \frac{\beta}{2} ||f||_{L_2(\Omega)}^2 \\
		\textrm{s.t.} \ \ -\nabla^2 u &= f \ \ \textrm{in } \Omega, \\
		u &= g \ \ \textrm{on } \partial \Omega,
	\end{align}
\end{subequations}
where $u$ is the state, $\hat{u}$ is the desired state, $0 < \beta \ll 1$ is a regularization parameter, $f$ is the control, and $\Omega$ is the domain with boundary $\partial \Omega$. Upon discretization, we obtain the system
\begin{equation}
	\label{eqn:pdeco_dist_discrete}
	\underbrace{
		\begin{bmatrix}
			M & K & 0 \\
			K & 0 & -M \\
			0 & -M & \beta M
	\end{bmatrix}}_{=: \sK}
	\begin{bmatrix}
		u_h \\
		\lambda \\
		f_h
	\end{bmatrix} = \begin{bmatrix}
		b \\
		d \\
		0
	\end{bmatrix},
\end{equation}
where $M$ is a symmetric positive definite mass matrix and $K$ is a symmetric positive definite discrete Laplacian. All blocks of $\sK$ are square (i.e., $n = m = p$).

As a second experiment we consider a boundary control problem:
\begin{subequations}
	\label{eqn:pdeco_dist_bnd}
	\begin{align}
		\min_{u,f} \frac{1}{2}&||u-\hat{u}||_{L_2(\Omega)}^2 + \frac{\beta}{2} ||g||_{L_2(\Omega)}^2 \\
		\textrm{s.t.} \ \ -\nabla^2 u &= 0 \ \ \textrm{in } \Omega, \\
		\frac{\partial u}{\partial n} &= g \ \ \textrm{on } \partial \Omega,
	\end{align}
\end{subequations}
which after discretization yields the linear system
\begin{equation}
	\label{eqn:pdeco_bnd_discrete}
	\underbrace{
		\begin{bmatrix}
			M & K & 0 \\
			K & 0 & -E \\
			0 & -E^T & \beta M_b
	\end{bmatrix}}_{=: \sK_{\partial}}
	\begin{bmatrix}
		u_h \\
		\lambda \\
		g_h
	\end{bmatrix} = \begin{bmatrix}
		b_{\partial} \\
		d_{\partial} \\
		0
	\end{bmatrix},
\end{equation}
where $M_b \in \mathbb{R}^{n_b \times n_b}$ (with $n_b < n$) is a boundary mass matrix. Thus, in the distributed control problem the mass matrix in the (2,3)/(3,2)-block is square and in the boundary control problem we consider a rectangular version of it. 

In all experiments that follow we set $\Omega$ to be the unit square.  We use uniform \textbf{Q1} finite elements and set $\beta = 10^{-3}$. The MATLAB code of Rees \cite{rees10b_code} was used to generate the linear systems.

\subsection{Eigenvalues of unpreconditioned matrices}
\label{sec:numex_unprec}
Here we compare the eigenvalues of $\sK$ \eqref{eqn:pdeco_dist_discrete} and $\sK_{\partial}$ \eqref{eqn:pdeco_bnd_discrete} to the eigenvalue bounds predicted by Theorem \ref{thm:bounds_unprec}. We use MATLAB's \texttt{eigs}/\texttt{svds} functions to compute the minimum and maximum eigenvalues/singular values of the matrix blocks $M, K, M_b$, and $E$.

We note that for the distributed control matrix $\sK$, all blocks are square and the (3,3)-block is positive definite; therefore, we can also use our results to obtain bounds on the re-ordered matrix
\begin{equation}
	\label{eqn:kflip}
	\sK_{\textrm{flip}} = \begin{bmatrix}
		\beta M & -M & 0 \\
		-M & 0 & K \\
		0 & K & M
	\end{bmatrix}.
\end{equation}
Both orderings generate the same extremal bounds but different interior bounds.

\begin{figure}[tbh!]
	\begin{subfigure}{.49\textwidth}
		\centering
		\includegraphics[width=.8\linewidth]{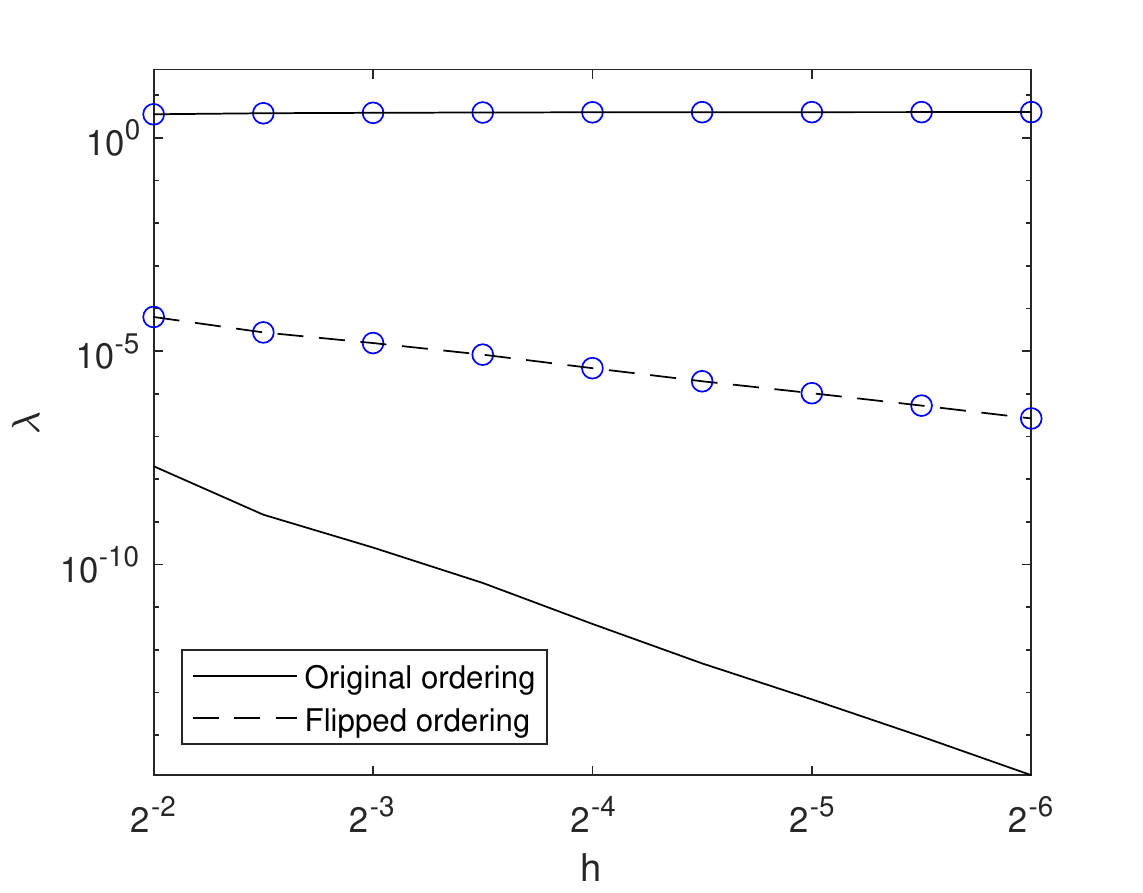}  
		\caption{Positive eigenvalues of $\sK$}
		\label{fig:pos_dist}
	\end{subfigure}
	\begin{subfigure}{.49\textwidth}
		\centering
		\includegraphics[width=.8\linewidth]{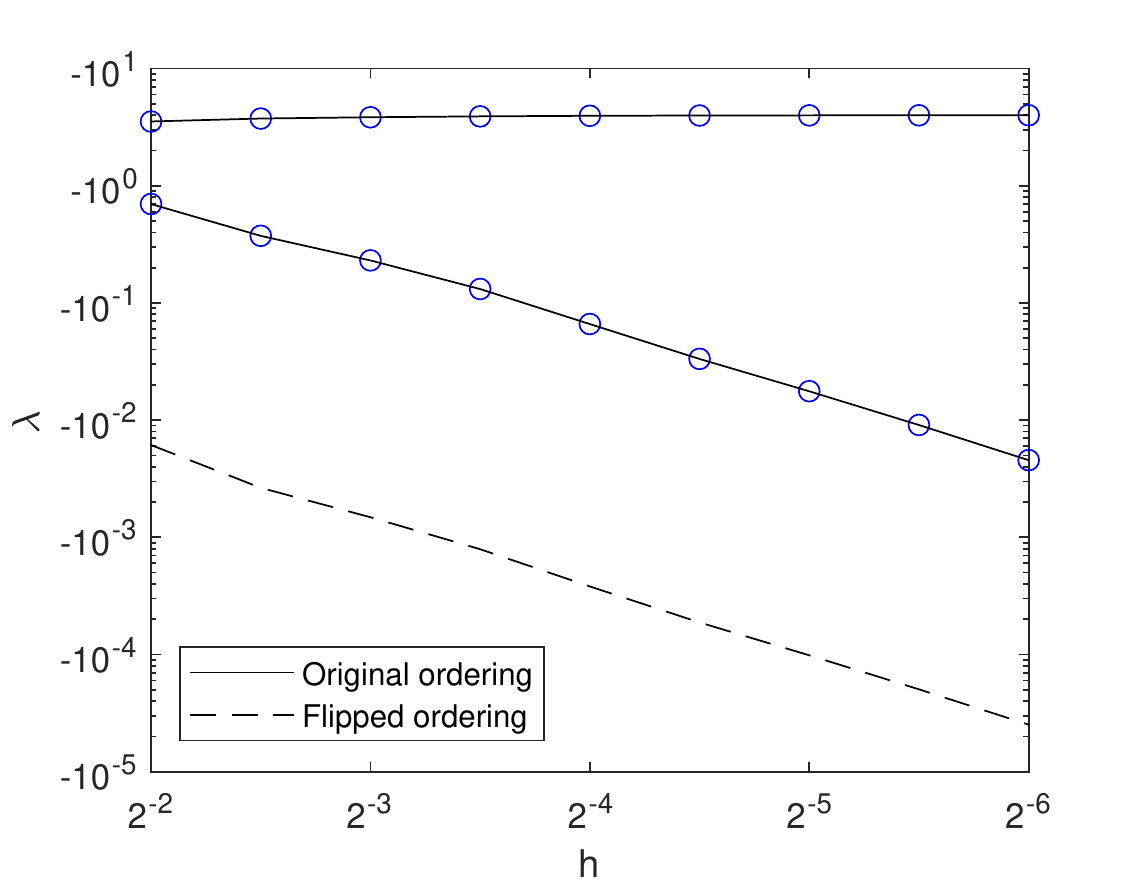}  
		\caption{Negative eigenvalues of $\sK$}
		\label{fig:neg_dist}
	\end{subfigure}
	\newline
	\begin{subfigure}{.49\textwidth}
		\centering
		\includegraphics[width=.8\linewidth]{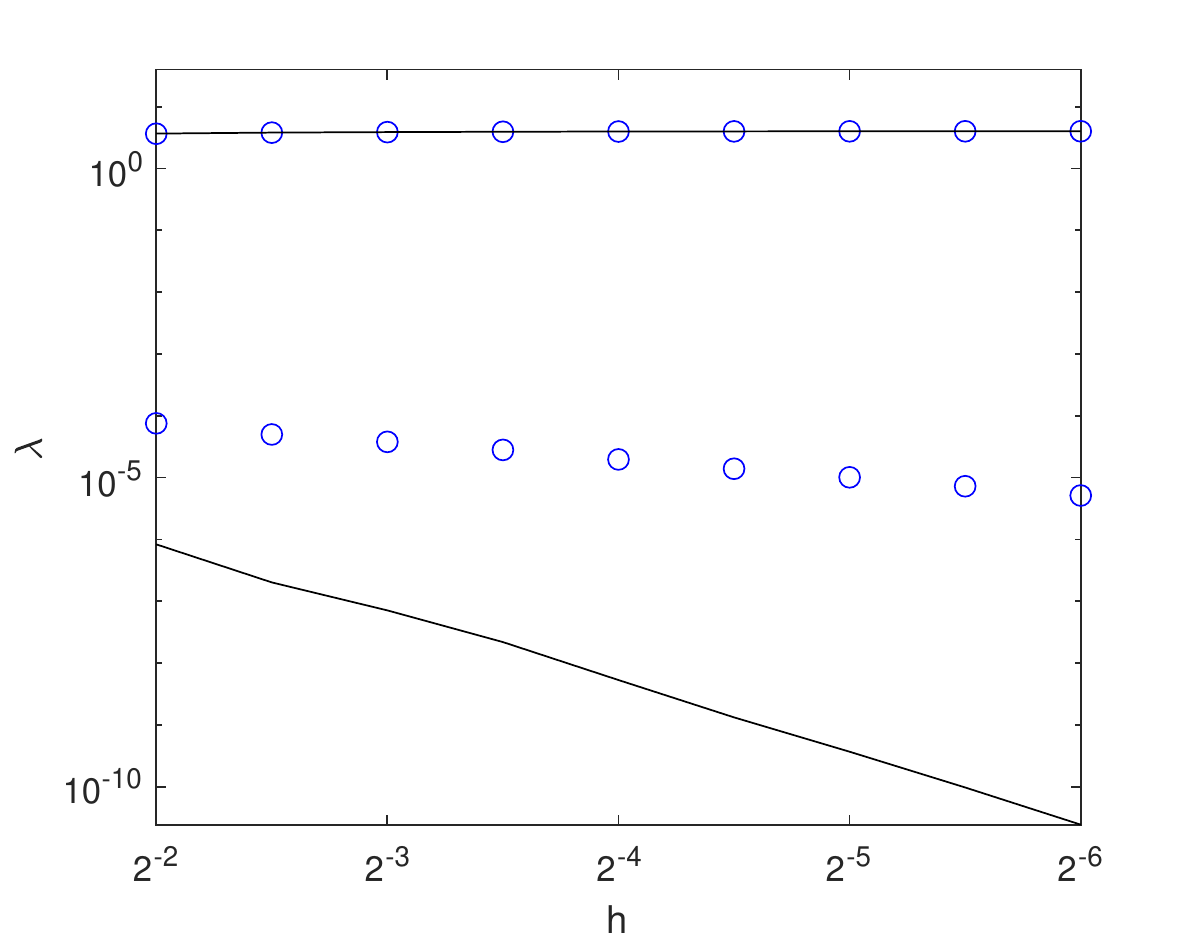}  
		\caption{Positive eigenvalues of $\sK_{\partial}$}
		\label{fig:pos_bnd}
	\end{subfigure}
	\begin{subfigure}{.49\textwidth}
		\centering
		\includegraphics[width=.8\linewidth]{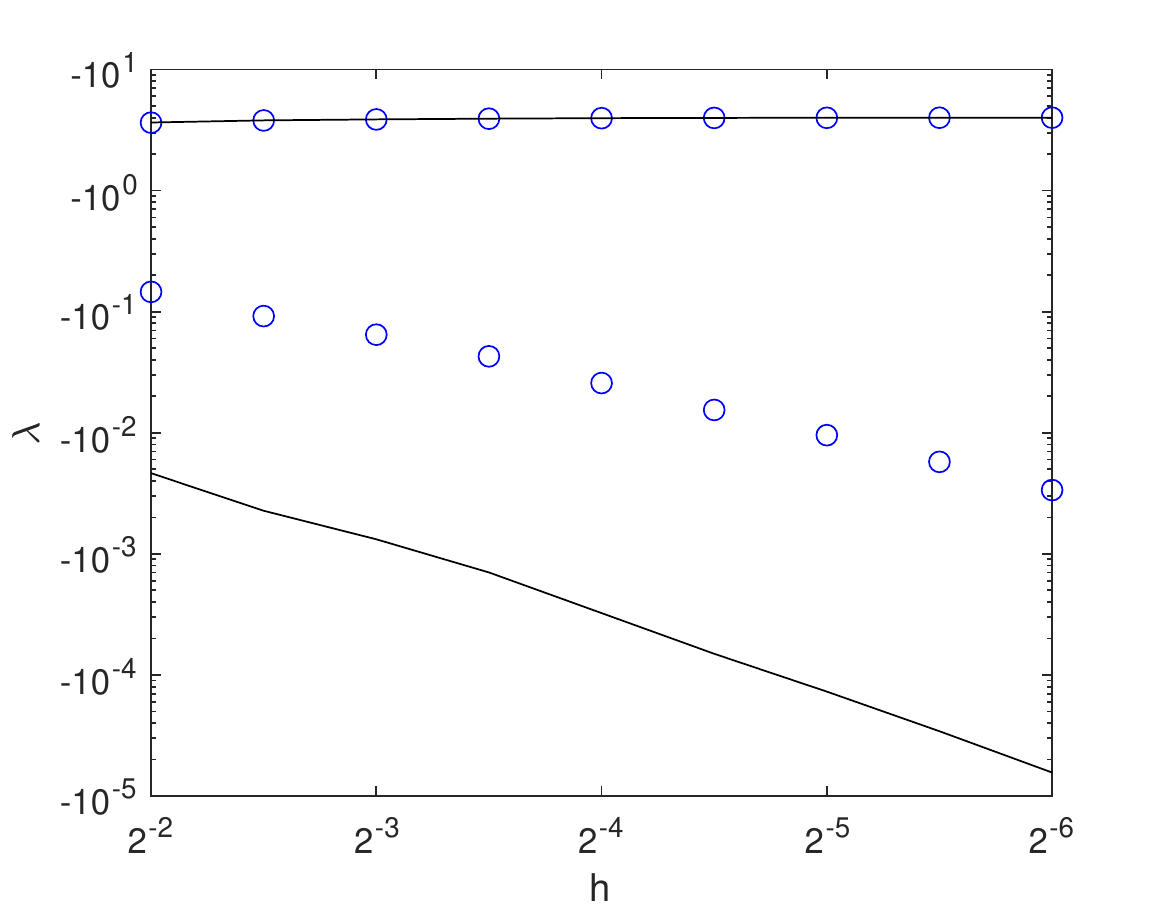}  
		\caption{Negative eigenvalues of $\sK_{\partial}$}
		\label{fig:neg_bnd}
	\end{subfigure}
	\caption{Largest and smallest positive (left) and negative (right) eigenvalues of the distributed control matrix $\sK$ (top) and the boundary control matrix $\sK_{\partial}$ (bottom). Blue circles indicate the eigenvalues, and black lines the bounds given by Theorem \ref{thm:bounds_unprec}. For $\sK$, the dashed lines indicate the bounds obtained by applying Theorem \ref{thm:bounds_unprec} to the reordered matrix $\sK_{\textrm{flip}}$.}
	\label{fig:pdeco_eig_bounds}
\end{figure}

Comparisons of the predicted eigenvalue bounds to the actual eigenvalues are shown in Section \ref{fig:pdeco_eig_bounds}. In the distributed control case, we show the bounds obtained for both the original matrix $\sK$ and those for the reordered matrix $\sK_{\textrm{flip}}$. In all cases, the bounds for the extremal eigenvalues are quite tight: this is because the $K$ block has the largest eigenvalues ($O(1)$ compared to $O(h^2)$ for the others -- see \cite[Proposition 1.29 and Theorem 1.32]{elman05}). Therefore, the largest positive and smallest negative eigenvalue of both $\sK$ and $\sK_{\partial}$ tend towards $\mu_{\max}^K$ and $-\mu_{\max}^K$, respectively. Referring to the proofs of the extremal bounds in Theorem \ref{thm:bounds_unprec}, the $3 \times 3$ matrices $R$ will contain a $\mu_{\max}^K$ (or $-\mu_{\max}^K$) term, with other lower-order terms. This means that the extremal eigenvalues of $R$ (and therefore the predicted eigenvalue bounds) will also be close to $\pm \mu_{\max}^K$.

It is more difficult to capture the interior bounds. In the distributed control case, each ordering (either the original ordering with $M$ in the leading block or the flipped ordering with $\beta M$ in the leading block) gives one bound that is quite tight and one that is loose. In the boundary control case, both interior bounds are loose.

\subsection{Eigenvalues of preconditioned matrices}
\label{sec:numex_prec}
We now consider preconditioning strategies for PDE-constrained optimization. We examine eigenvalue bounds with both exact and approximate Schur complements. 

For the distributed control problem, we work with the reordered matrix $\sK_{\textrm{flip}}$ \eqref{eqn:kflip}. The Schur complement preconditioner for $\sK_{\textrm{flip}}$ is
\begin{equation}
	\label{eqn:dist_ideal}
	\sM= \begin{bmatrix}
		\beta M & 0 & 0 \\
		0 & \frac{1}{\beta}M & 0 \\
		0 & 0 & M + \beta KM^{-1} K
	\end{bmatrix}.
\end{equation}
The first and second blocks are mass matrices, which are cheap to invert, so we leave these terms as they are. For the second Schur complement $S_2 = M + \beta KM^{-1}K$, we  use the approximation $\tilde{S}_2 = \left( M + \sqrt{\beta}K \right)M^{-1} \left( M + \sqrt{\beta}K \right)$ proposed by Pearson and Wathen \cite{pearson12} to obtain the preconditioner:
\begin{equation}
	\label{eqn:dist_approx}
	\tilde{\sM} = \begin{bmatrix}
		\beta M & 0 & 0 \\
		0 & \frac{1}{\beta}M & 0 \\
		0 & 0 & \left( M + \sqrt{\beta}K \right)M^{-1} \left( M + \sqrt{\beta}K \right) 
	\end{bmatrix}.
\end{equation}
Per \cite[Theorem 4]{pearson12}, the eigenvalues of $\tilde{S}_2^{-1}S_2$ satisfy $\Lambda(\tilde{S}_2^{-1}S_2) \in \left[ \frac{1}{2}, 1 \right]$. Thus $\tilde{\sM}$ satisfies Theorem \ref{thm:spec_equiv} with $\alpha_0 = \beta_0 = \alpha_1 = \beta_1 = 1$, $\alpha_2 = \frac{1}{2}$, and $\beta_2 = 1$.

\begin{figure}[tbh!]
	\begin{subfigure}{.49\textwidth}
		\centering
		\includegraphics[width=.8\linewidth]{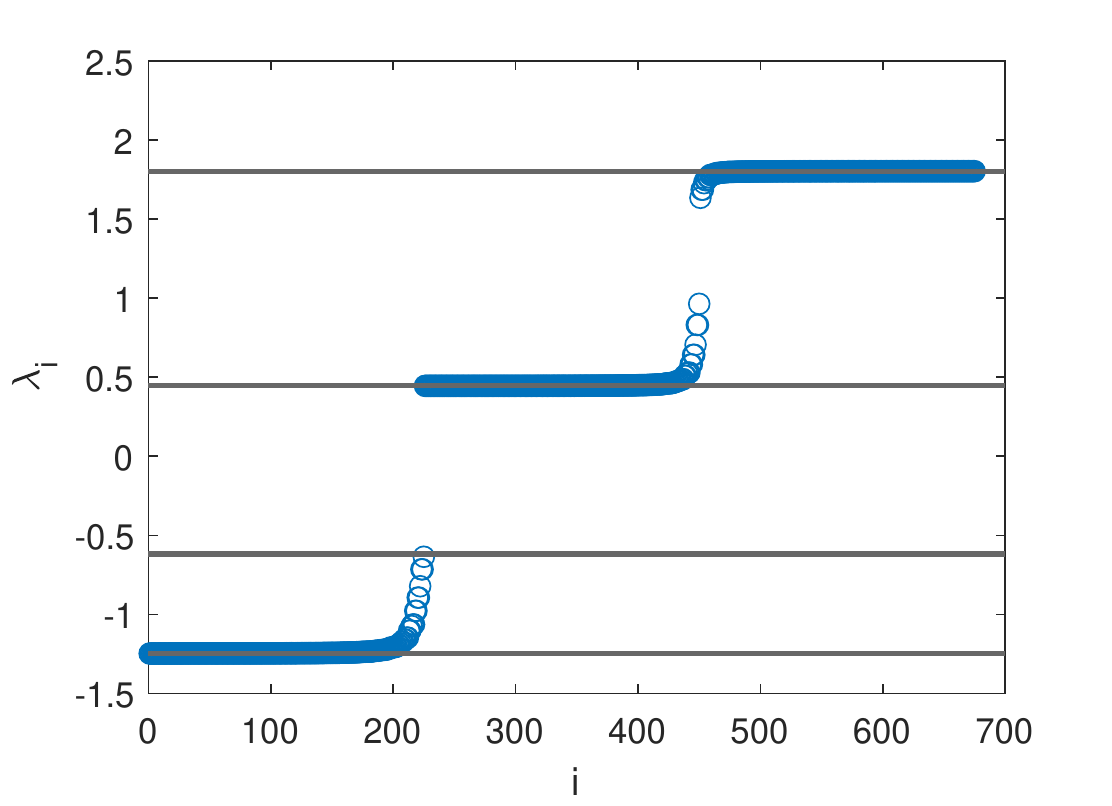}  
		\caption{$\sM^{-1}\sK_{\textrm{flip}}$}
		\label{fig:sc_exact}
	\end{subfigure}
	\begin{subfigure}{.49\textwidth}
		\centering
		\includegraphics[width=.8\linewidth]{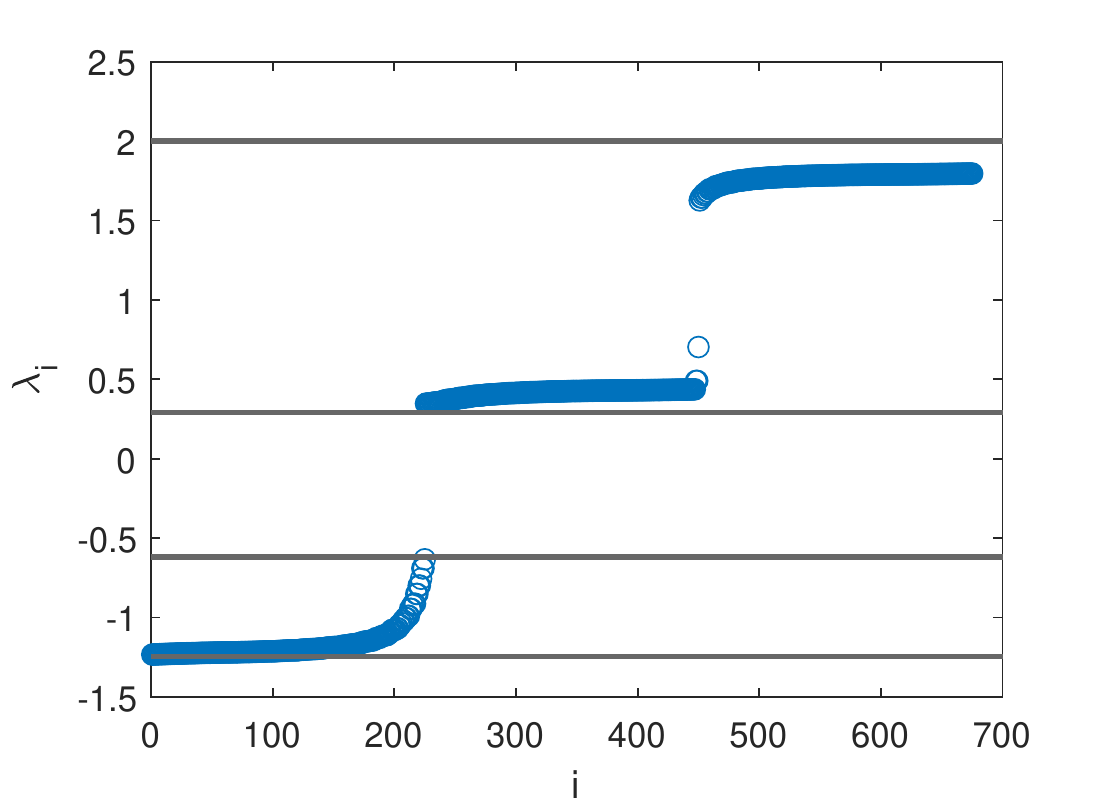}  
		\caption{$\tilde{\sM}^{-1}\sK_{\textrm{flip}}$}
		\label{fig:sc_inexact}
	\end{subfigure}
	\caption{Eigenvalue plots for reordered distributed Poisson control matrix $\sK_{\textrm{flip}}$, with exact preconditioner $\sM$ (left) and approximate preconditioner $\tilde{\sM}$ (right). Eigenvalues are shown by the blue circles; eigenvalue bounds (from Theorem \ref{thm:bounds_prec_d0} on left and Corollary \ref{cor:inexact_precon_bounds_de0} on right) are shown by lines.}
	\label{fig:pdeco_dist_precon}
\end{figure}

Plots of the preconditioned eigenvalues for $h = 2^{-4}$ are shown in Section \ref{fig:pdeco_dist_precon}. The value $\eta_E$, defined as the maximal eigenvalue of
$$
(CS_1^{-1}C^T)^{-1}E = \beta(KM^{-1}K)^{-1}M,
$$
is approximately $2.6 \times 10^{-7}$. We note that for 2D problems with uniform \textbf{Q1} finite element discretizations the value $\eta_E$ is $O(\beta h^4)$, and will thus be small in general.

Comparing Figures \ref{fig:sc_exact} and \ref{fig:sc_inexact}, we notice that the bounds on the negative eigenvalue bounds do not change when we use the approximate Schur complement, but the lower positive bound becomes smaller (from 0.4450 with the exact Schur complement to 0.2929 for the approximate Schur complement) and the upper positive bound becomes larger (1.8019 in the exact case and 2 in the approximate case). We note that the eigenvalues appear to be very close to the predicted bounds except for the upper positive eigenvalues of $\tilde{\sM}^{-1}\sK$. As discussed in Remark 3, this kind of minor looseness in the upper bound can happen when we have highly accurate Schur complement approximations (as in this case, where we are exactly inverting the two $M$ blocks).

For the matrix $\sK_{\partial}$ arising from the boundary control problem, the Schur complement preconditioner is
\begin{equation}
	\label{eqn:bnd_ideal}
	\sM_{\partial} = \begin{bmatrix}
		M & 0 & 0 \\
		0 & KM^{-1}K & 0 \\
		0 & 0 & \beta M_b + E^T\left( KM^{-1}K \right)^{-1}E
	\end{bmatrix}.
\end{equation}
In practice, the first Schur complement $KM^{-1}K$ can be inverted approximately with, for example, a multigrid method. For the second Schur complement, we note that for the 2D Poisson control problem on a uniform \textbf{Q1} grid, the eigenvalues of $K$ are between $O(h^2)$ and $O(1)$, while those of the mass matrices are all $O(h^2)$. Therefore, the term $\beta M_b$ will dominate $E^T\left( KM^{-1}K \right)^{-1}E$ for all but very small values of $\beta$ (which are not commonly used in practice). Therefore, an approximate preconditioner for $\sK_{\partial}$ is given by
\begin{equation}
	\label{eqn:bnd_approx}
	\tilde{\sM}_{\partial} = \begin{bmatrix}
		M & 0 & 0 \\
		0 & KM^{-1}K & 0 \\
		0 & 0 & \beta M_b
	\end{bmatrix}.
\end{equation}
We note that this preconditioner is presented in \cite{rees10b}, though it is derived there from the block-$2 \times 2$ formulation of $\sK$.

\begin{figure}[tbh!]
	\begin{subfigure}{.49\textwidth}
		\centering
		\includegraphics[width=.8\linewidth]{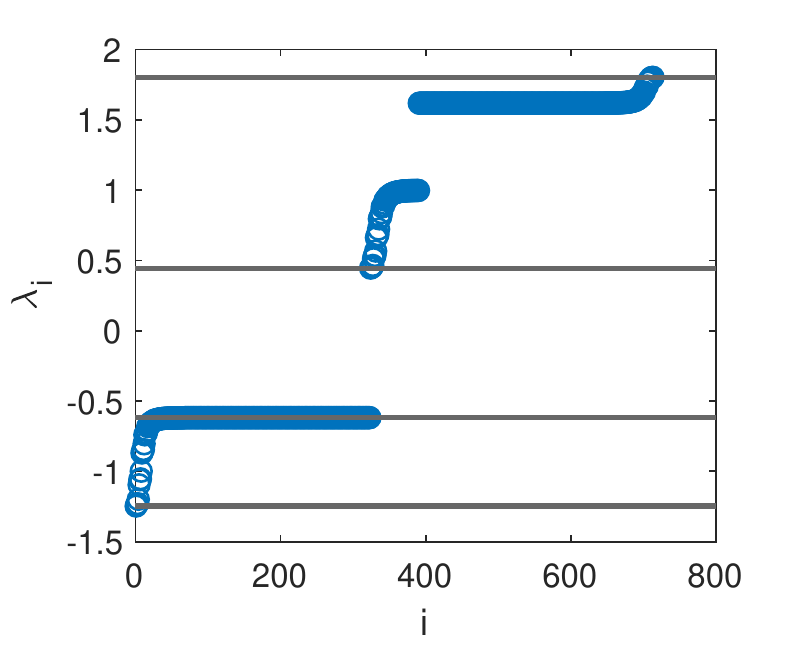}  
		\caption{$\sM_{\partial}^{-1}\sK_{\partial}$}
		\label{fig:bnd_sc_exact}
	\end{subfigure}
	\begin{subfigure}{.49\textwidth}
		\centering
		\includegraphics[width=.8\linewidth]{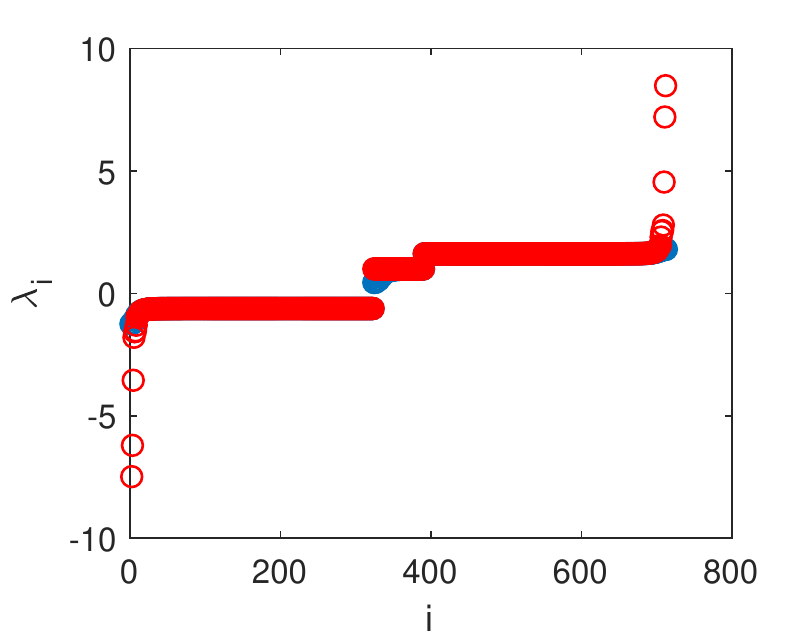}  
		\caption{$\sM_{\partial}^{-1}\sK_{\partial}$ (blue); $\tilde{\sM}_{\partial}^{-1}\sK_{_{\partial}}$ (red)}
		\label{fig:bnd_sc_overlay}
	\end{subfigure}
	\caption{Eigenvalue plots for the boundary control problem $\sK_{\partial}$, with exact preconditioner $\sM$  and approximate preconditioner $\tilde{\sM}$. 
		On left, eigenvalue bounds (from Theorem \ref{thm:bounds_prec_d0}) are shown by horizontal lines. Right: all but the single smallest and largest eigenvalues for both preconditioners; values for $\sM$ are in blue, and those for $\tilde{\sM}$ are in red.}
	\label{fig:pdeco_bnd_precon}
\end{figure}

Plots of the preconditioned eigenvalues for $h = 2^{-4}$ are shown in Figure \ref{fig:pdeco_bnd_precon}. For $\tilde{\sM}_{\partial}^{-1}\sK_{_{\partial}}$, we note that there is a single large-magnitude positive eigenvalue and a large-magnitude negative eigenvalue, whose absolute values are nearly 200, and they make it difficult to see how the other eigenvalues compare to to those of $\sM_{\partial}^{-1}\sK_{_{\partial}}$. Thus in Figure \ref{fig:bnd_sc_overlay}, we omit the largest and smallest eigenvalues of $\tilde{\sM}_{\partial}^{-1}\sK_{_{\partial}}$ and overlay the others on the corresponding eigenvalues of $\sM_{\partial}^{-1}\sK_{_{\partial}}$. We notice that most other eigenvalues of $\tilde{\sM}_{\partial}^{-1}\sK_{_{\partial}}$ remain close to those of $\sM_{\partial}^{-1}\sK_{_{\partial}}$. We also note that the performance of this preconditioner depends on $\beta$: in particular, the performance of the Schur complement approximation deteriorates for small $\beta$. We refer to \cite{pearson12, rees10b} for further discussion of this.

It is evident from Figure \ref{fig:bnd_sc_exact} that our bounds are tight and effective. Unlike in the boundary control example, the Schur complement approximation $\tilde{S}_2$ used here does not have $\beta$- and $h$-independent constants $\alpha_2, \beta_2$ such that $\Lambda(\tilde{S}_2^{-1}S_2) \in [\alpha_2, \beta_2]$, so our analyses on Schur complement approximations in Section Section \ref{sec:prec_inexact} are difficult to apply. Nonetheless, we observe from Figure \ref{fig:bnd_sc_overlay} that most of the eigenvalues of $\tilde{\sM}_{\partial}^{-1}\sK_{\partial}$ remain very close to those of $\sM_{\partial}^{-1}\sK_{\partial}$. Thus, we see that the eigenvalue bounds for the ``ideal'' Schur complement preconditioner may still be of use, provided that we have an effective approximation of the Schur complement.

\section{Conclusions}
\label{sec:conclusions}
The increasing importance of double saddle-point systems requires attention to spectral properties of the matrices involved. We have shown that energy estimates are an effective tool for obtaining eigenvalue bounds in this case. The assumptions we make are rather general, and the analysis covers a large class of problems. 

There are several directions for potential future work. Following Remark \ref{rem:looseness}, specific assumptions on the magnitudes of the norms of the matrices $D$ and $E$ may yield additional results and insights on the bounds. The rank structure of the blocks may also have a significant effect, and it may be useful to eliminate the positive definiteness requirement of $A$ and/or consider rank-deficient $B$ and $C$. It may also be useful to consider nonsymmetric double saddle-point systems, and block triangular rather than block diagonal preconditioners, although when symmetry is lost eigenvalue bounds may not be a sufficient tool for predicting convergence rates of Krylov subspace solvers.

The tightness of our bounds indicates that spectral analysis is beneficial in capturing the properties of the matrices involved. This, in turn, makes it possible to  effectively predict the convergence rate of Krylov subspace solvers for this important class of problems.

\bibliographystyle{abbrv}
\bibliography{bg21-ref}

\end{document}